\newtheorem{theorem}{Theorem}[section]
\newaliascnt{conj}{theorem}
\newaliascnt{cor}{theorem}
\newaliascnt{lemma}{theorem}
\newaliascnt{fact}{theorem}
\newaliascnt{claim}{theorem}
\newaliascnt{prop}{theorem}
\newaliascnt{definition}{theorem}
\newtheorem{lemma}[lemma]{Lemma}
\newtheorem{claim}[claim]{Claim}
\newtheorem{prop}[prop]{Proposition}
\newtheorem{definition}[definition]{Definition}
\theoremstyle{remark}
\newaliascnt{remark}{theorem}
\newtheorem{remark}[remark]{Remark}
\theoremstyle{remark}
\newaliascnt{exam}{theorem}
\newtheorem{exam}[exam]{Example}
\numberwithin{equation}{subsection}
\newcommand{\vspan}{\operatorname{span}}
\newcommand{\diff}{\operatorname{Diff^+_0}}
\newcommand{\Hol}{\operatorname{Hol}}
\newcommand{\const}{\operatorname{const}}
\newcommand{\id}{\operatorname{Id}}
\newcommand{\p}{\partial}
\newcommand{\E}{\mathcal{E}}
\newcommand{\F}{\mathcal{F}}
\newcommand{\h}{\mathcal{H}}
\newcommand{\LL}{\mathcal{L}}
\newcommand{\U}{\mathcal{U}}
\newcommand{\RR}{\mathbb{R}}
\newcommand{\n}{\mathbf{n}}
\begin{document}

\title[On Legendrian foliations in contact manifolds]{On Legendrian foliations in contact manifolds I: Singularities and neighborhood theorems}

\author{Yang Huang}
\address{Max-Planck-Institut f\"ur Mathematik, Vivatsgasse 7, Bonn 53111\\Germany }
\email{yhuang@mpim-bonn.mpg.de}

\maketitle

\begin{abstract}
In this note we study several aspects of coisotropic submanifolds of a contact manifold. In particular we give a structure theorem for the singularity of the characteristic foliation of a coisotropic submanifold. Moreover we establish the existence and uniqueness results of germs of contact structures near Legendrian foliations, which is a special case of coisotropic submanifold. This note can be thought of as an attempt to generalize the study of surfaces in three-dimensional contact geometry to higher dimensions.
\end{abstract}

\section{introduction}
A contact manifold $(M^{2n+1},\xi)$ is a closed, oriented manifold with a hyperplane distribution $\xi=\ker\alpha$, where $\alpha$ is a 1-form on $M$ satisfying the condition that $\alpha\wedge(d\alpha)^n>0$. Conventionally, we say $M$ is a higher dimensional contact manifold if $n \geq 2$, as opposed to three dimensional contact geometry, which is much better understood currently. In this note we are primarily interested in higher dimensional contact geometry. So although all the results work equally well in dimension three, the conclusions are either trivial or well-known in that case.

For $n=1$ case, it is very important to understand embedded surfaces in $M$, and in particular germs of contact structures on them. There has been an extensive study on these topics in the past few decades. For example the characteristic foliation on surfaces was studied in \cite{Eli1989}, \cite{Eli1992}, and convex surface theory was studied in \cite{Gir1991}.

We wish to generalize our understanding of surfaces in dimension three to higher dimensions as much as possible, and this note can be considered as a small step in that direction. A reasonable analog of surfaces in higher dimension is coisotropic submanifolds, which will be defined in Section \ref{sec:singular_loci}. Briefly, a submanifold $Y \subset M$ of dimension greater than $n$ is {\em coisotropic} if the pointwise intersection $T_pY \cap \xi_p$ is a coisotropic subspace of $\xi_p$ with respect to the conformal symplectic form $d\alpha$, for any $p\in Y$. A coisotropic submanifold $(Y,\F)$ of dimension $n+1$ is called a {\em Legendrian foliation} because it naturally comes with a, possibly singular, foliation $\F$ with Legendrian leaves. To state the main results of this paper, we need one more definition. The {\em singular loci} $S(Y)$ of a coisotropic submanifold $Y \subset M$ is the set $\{p\in Y~|~T_p Y \subset \xi_p\}$. Some previously known constructions, including plastikstufe \cite{Nie2006}, \cite{Pre2007}, \cite{EP2011}, and bordered Legendrian open books (bLOB) \cite{PNW2013}, are all belong to the category of Legendrian foliations.

The goal of this note is to establish basic properties of coisotropic submanifolds. The main results are as follows.

\begin{theorem}\label{thm:sing_loci}
Let $Y^k \subset M^{2n+1}$ be a coisotropic submanifold of dimension $n+1 \leq k \leq 2n$. Then each path-connected component of $S(Y)$, if non-empty, is a submanifold without boundary of dimension $2n-k$, up to a $C^\infty$-small isotopy of $Y$. Moreover, when $k=n+1$, the normal bundle of each path-connected component of $S(Y)$, viewed as a 2-disk bundle, is flat\footnote{The flat structure in general does not respect the linear structure on the normal bundle. See Section \ref{subsec:nbhd_sing} for more details.}.
\end{theorem}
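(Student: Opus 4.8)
The plan is to realize $S(Y)$ as the zero set of the $1$-form $\beta:=\alpha|_Y\in\Omega^1(Y)$ — indeed $p\in S(Y)$ exactly when $\alpha_p$ annihilates $T_pY$ — and then extract enough structure from the coisotropic hypothesis to pin down the dimension. At a non-singular point $p$ one has $\ker\beta_p=T_pY\cap\xi_p$, which is coisotropic of dimension $k-1$ in $(\xi_p,d\alpha_p)$, so $d\alpha_p|_{\ker\beta_p}$ has corank exactly $2n-k+1$; at a singular point $T_pY\subset\xi_p$ is itself coisotropic and $d\alpha_p|_{T_pY}$ has corank $2n-k$. A short linear-algebra computation, splitting $T_pY$ according to whether the Reeb direction $R_p$ lies in it, shows that $\omega:=d\alpha|_Y$ has corank exactly $c:=2n-k$ at every point, except along the locus where $R$ is tangent to $Y$, where the corank jumps to $c+2$. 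Since that locus has positive codimension and can be pushed off by a $C^\infty$-small isotopy of $Y$, I will assume henceforth that $\omega$ is a closed $2$-form of constant rank $2(k-n)$, so $\mathcal G:=\ker\omega$ is an integrable distribution of rank $c$.

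Next I would put $\omega$ into foliated Darboux form: locally $Y\cong U^{2(k-n)}\times V^{c}$ with $\omega=\pi_U^*\omega_0$ for a symplectic form $\omega_0$ on $U$, and with $\mathcal G$ the foliation by the slices $\{u\}\times V$. Writing $\omega_0=d(\sum_i p_i\,dq_i)$ in Darboux coordinates, closedness of $\beta-\sum_i p_i\,dq_i$ gives $\beta=\sum_i p_i\,dq_i+df$ for some $f\in C^\infty(Y)$. Now comes the key mechanism: coisotropy of $Y$ on its non-singular part says $d\alpha|_{\ker\beta}$ has corank $\ge c+1$, equivalently $\beta\wedge\omega^{k-n}\equiv 0$, which by continuity holds on all of $Y$; substituting the normal form, $\beta\wedge\omega_0^{k-n}=df\wedge\omega_0^{k-n}=\sum_j(\partial_{v_j}f)\,dv_j\wedge\omega_0^{k-n}$, and for $c\ge1$ this forces $\partial_{v_j}f\equiv0$, i.e.\ $f=f(p,q)$. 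Hence $\beta$ has no $dv_j$-components, its remaining components depend only on $(p,q)$, and $S(Y)=Z(\beta)=Z(\beta|_U)\times V$. Here $\beta|_U$ is a $1$-form on the $2(k-n)$-manifold $U$ with $d(\beta|_U)=\omega_0$ non-degenerate, hence dual to a vector field; perturbing $f$ within functions of $(p,q)$ — which does not reintroduce $v$-dependence and so keeps $Y$ coisotropic — makes the zeros of this vector field non-degenerate, hence isolated. Therefore $S(Y)$ is locally $\{\mathrm{pt}\}\times V$, a submanifold without boundary of dimension $c=2n-k$, and the asserted $C^\infty$-small isotopy is obtained by globalizing these local perturbations in the usual way.

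For the flat normal bundle when $k=n+1$ (so $c=n-1$ and $k-n=1$, and $U$ is $2$-dimensional): from the normal form $S(Y)=Z(\beta|_U)\times V$ it follows that each path-connected component $N$ of $S(Y)$ is an integral manifold of $\mathcal G=\ker(d\alpha|_Y)$, indeed a leaf, with $TN=\mathcal G|_N$. I would then choose a tubular-neighborhood fibration $\pi\colon\mathcal N\to N$ by $2$-disks whose fibers are transverse to $\mathcal G$ — possible because in the local models the slices $U^2\times\{v\}$ already have this property — so that $\mathcal G|_{\mathcal N}$ is an Ehresmann connection for $\pi$ of rank $c$, complementary to the fibers. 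Since $\mathcal G$ is a genuine (integrable) foliation, this connection has vanishing curvature, exhibiting the normal $2$-disk bundle as flat, with holonomy given by the germ-of-diffeomorphism holonomy of $\mathcal G$ around loops in $N$; this holonomy need not be linearizable, which is exactly the content of the footnote.

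The main obstacle I anticipate is the transversality bookkeeping behind the phrase ``up to a $C^\infty$-small isotopy.'' One must check that the required isotopies — making $R$ transverse to $Y$, and making the zeros of the fiberwise $1$-form $\beta|_U$ non-degenerate, simultaneously and globally — can be realized \emph{within} the class of coisotropic submanifolds (coisotropy being a closed, not open, condition) and kept $C^\infty$-small. This amounts to showing that the ``good'' coisotropic submanifolds are dense, which should follow from a parametric transversality argument for the jet section encoding the corank of $d\alpha|_Y$ together with the vanishing of $\beta$ along $\mathcal G$; assembling this cleanly, rather than the local linear algebra, is where the real work lies.
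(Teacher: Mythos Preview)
Your proposal is correct in its essentials and rests on the same core mechanism as the paper, though you package it more invariantly. Both arguments hinge on the observation that $\ker(d\alpha|_Y)$ is an integrable distribution of rank $2n-k$ along which $\lambda:=\alpha|_Y$ is invariant (so that $S(Y)=\{\lambda=0\}$ is saturated by its leaves). The paper establishes this by writing $Y$ locally as a graph in a Darboux chart, producing explicit commuting vector fields $V_1,\dots,V_{n-1}$, and verifying $i_{V_k}\alpha=0$, $i_{V_k}d\lambda=0$, $[V_k,V_l]=0$ by brute-force calculation from the PDE system encoding the foliation condition. You instead observe (via a short linear-algebra count) that $\omega:=d\lambda$ has constant corank near $S(Y)$, invoke the foliated Darboux theorem, and use the identity $\beta\wedge\omega^{k-n}=0$ to force the primitive $f$ to be basic. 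Your route is cleaner and treats all $k$ uniformly; the paper's buys explicit formulas that feed directly into the later neighborhood theorems (in particular the covariant-constant Liouville form).

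Two small points. First, your worry about the locus where $R$ is tangent to $Y$ is a non-issue for the structure of $S(Y)$: at any $p\in S(Y)$ one has $T_pY\subset\xi_p$, so $R_p\notin T_pY$, and hence $\omega$ automatically has constant corank on a neighborhood of $S(Y)$; no preliminary isotopy is needed there. Second, the genuine remaining gap you yourself flag---realizing the perturbation of $f$ as a $C^\infty$-small isotopy of $Y$ \emph{through coisotropic submanifolds}---is exactly where the paper does something concrete that you do not: it exhibits an explicit graphical deformation $z=z(y_1)$ (in suitable coordinates) and checks by hand that the graph remains coisotropic while shrinking the singular set. You would need either an analogous explicit model or a local normal-form/deformation argument showing that perturbations of $\beta$ by $d_Ug$ are induced by ambient coisotropic isotopies; this is believable but not automatic, since coisotropy is a closed rather than open condition.
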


\begin{remark}
Note that $S(Y) \subset Y$ is a closed subset because it is the zero locus of a 1-form, but there may exist open path-connected components limiting on some closed components of $S(Y)$, which makes the structure more complicated than desired.
\end{remark}

\begin{remark}
The constructions of both plastikstufe and bLOB assume that the singular locus has symplectically trivial normal bundle. But in general a flat disk bundle is not necessarily trivial (if $n \geq 4$). Even the bundle is trivial, the flat connection may as well be nontrivial.
\end{remark}

\begin{remark}
\autoref{thm:sing_loci} also hold for immersed $Y$.
\end{remark}

To state the second result we need some preparations. Let $(Y,\F)$ be a $(n+1)$-dimensional manifold with a (singular) codimension one foliation $\F$, and let $S(Y)$ denote the singular locus of $\F$. Briefly speaking $S(Y)$ is a {\em normally controlled singularity} if the following holds:
\begin{enumerate}
	\item each component of $S(Y)$ is a closed $(n-1)$-submanifold,
	\item the normal bundle of each component of $S(Y)$ is flat,
	\item for each component of $S(Y)$, there exists a {\em covariant constant Liouville} (CCL) 1-form, adapted to $\F$, with respect to the flat connection. See Section \ref{subsec:nbhd_sing} and \ref{sec:nbhd_sing} for more details.
\end{enumerate}
Note however that a particular choice of a CCL 1-form is not part of the data for normally controlled singularity. We will always assume that $S(Y)$ is co-oriented, i.e., the normal bundle of $S(Y)$ is oriented. Now we state the existence and uniqueness results of germs of contact structures near a Legendrian foliation.

\begin{theorem}\label{thm:nbhd}
Let $(Y^{n+1},\F)$ be a (singularly) foliated manifold such that $\F$ has normally controlled singularity (might be empty). Then the following holds:
\begin{itemize}
	\item {\em(Existence)} There exists a rank $n$ real vector bundle $E$ over $Y$, and a contact structure $\xi$ defined in a neighborhood of the 0-section such that $(Y,\F)$ is a Legendrian foliation with respect to $\xi$.
	\item {\em(Uniqueness)} Suppose $Y \subset M^{2n+1}$ is a submanifold, and $\xi_0,\xi_1$ are two contact structures on $M$ such that $\F$ is a Legendrian foliation with respect to both contact structures. Then there exists a neighborhood $\U_i(Y)$ of $Y$, $i=0,1$, and a diffeomorphism $\phi:\U_0(Y) \to \U_1(Y)$ such that $\phi(Y)=Y$ and $\phi^\ast(\xi_1)=\xi_0$.
\end{itemize}
\end{theorem}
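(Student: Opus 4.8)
\emph{Structure of the argument.} Both statements are proved around the singular locus $S(Y)$, which carries all of the difficulty; everything over $Y\setminus S(Y)$ is routine Weinstein/Gray-stability technology.

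\emph{Existence.} I would build $(E,\xi)$ locally and glue. Over $Y\setminus S(Y)$ the foliation $\F$ is a genuine coorientable codimension-one foliation: since the normal bundle of $S(Y)$ is co-oriented, one extends a CCL $1$-form from a neighborhood of $S(Y)$ to a global defining $1$-form $\beta$ for $\F$ with $\beta^{-1}(0)=S(Y)$. There I set $E=T^*\F$, the cotangent bundle along the leaves; a choice of vector field transverse to $\F$ gives a ``foliated tautological $1$-form'' $\lambda$ on the total space of $T^*\F$, and $\alpha=\pi^*\beta-\lambda$ is, by a direct computation, contact near the zero section, with $Y$ coisotropic and characteristic foliation $\F$. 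Near a component $N$ of $S(Y)$ I would instead use the explicit model: on $N\times D^2\times\RR^n$ with coordinates $(q,(x,y),(p_1,\dots,p_{n-1},t))$, take $\alpha=\lambda_N+dt-\sum_i p_i\,dq_i$, where $\lambda_N$ is the CCL $1$-form on the disk; one checks this is contact, $\{p=0,\ t=0\}$ is coisotropic, and its characteristic foliation is the radial-type singular foliation $\F$ with singular locus exactly $N$. Covariant-constancy of $\lambda_N$ for the flat connection, together with the requirement that the holonomy preserve $\F$, is exactly what lets this model be assembled over a whole tubular neighborhood of $N$, the $p_i$-coordinates being transported by the flat connection. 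On the collar separating the two regimes the two candidate bundles are canonically $T^*\F$ and the two $1$-forms both restrict to $\beta$ on the zero section; as the contact condition is open and holds along $Y$, one interpolates through contact forms and runs a relative Moser argument to glue, producing a single $(E,\xi)$.

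\emph{Uniqueness.} Replacing $\alpha_0$ by $g\alpha_0$, where $g=(\alpha_1|_{TY})/(\alpha_0|_{TY})$ is positive and extends smoothly across $S(Y)$ (both restricted forms vanish to exactly first order there, with proportional linearizations, since both define $\F$ near $S(Y)$), I may assume $\alpha_0|_{TY}=\alpha_1|_{TY}$. The key step is to upgrade this to $\alpha_0=\alpha_1$ as $1$-forms on $TM|_Y$ by a diffeomorphism preserving $Y$. Over $Y\setminus S(Y)$ this is a pointwise adjustment, shearing normal directions into $TY$, solvable because $\alpha_0|_{T_pY}\neq0$ there. Over $S(Y)$ the pointwise fix degenerates, so instead I would first establish a normal form: near each component of $S(Y)$ there are coordinates in which $\xi_i=\ker\bigl(\lambda+dt-\sum_j p_j\,dq_j\bigr)$ for $i=0,1$. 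This is a Moser argument built on the isotropic neighborhood theorem for $S(Y)\subset M$ — one checks $S(Y)$ is isotropic and its conformal symplectic normal bundle is the disk bundle equipped with $d\lambda$ — together with the flat connection, which is intrinsic to the germ of $\F$ along $S(Y)$ and hence is the same for $\xi_0$ and $\xi_1$; consequently both contact structures are carried onto the model and in particular agree near $S(Y)$. Once $\alpha_0=\alpha_1$ on $TM|_Y$, the convex combination $\alpha_t=(1-t)\alpha_0+t\alpha_1$ is contact in a neighborhood of $Y$, and Gray stability yields an isotopy $\phi_t$ with $\phi_t^*\alpha_t=f_t\alpha_0$; since $\dot\alpha_t=\alpha_1-\alpha_0$ vanishes on $TM|_Y$, the generating vector field vanishes along $Y$, so $\phi_1(Y)=Y$ (pointwise) and $\phi_1^*\xi_1=\xi_0$.

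\emph{Main obstacle.} The crux is at $S(Y)$, in two guises: (i) for existence, that the explicit model near a component of $S(Y)$ globalizes — this forces genuine use of the precise notion of a covariant constant Liouville $1$-form adapted to $\F$ and of the flat-but-in-general-nonlinear structure of the normal disk bundle (cf.\ the footnote to \autoref{thm:sing_loci}); and (ii) for uniqueness, the normal form near $S(Y)$, i.e.\ that the germ of the contact structure transverse to and along $S(Y)$ is pinned down by the CCL data. Both are Moser-type problems in the degenerate regime $\alpha|_{TY}\equiv0$, where one must run the homotopy argument using the disk-bundle symplectic form $d\alpha$ in place of $\alpha$ itself.
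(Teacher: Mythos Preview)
Your existence argument is essentially the paper's: it builds the contact form $\beta-\eta$ on $T^*\F$ over the nonsingular part (\autoref{lem:exist_nonsing}), the model $dz+\tilde\beta-\eta$ on $\RR\times T^*\h$ over a neighborhood of each singular component (\autoref{lem:contact_germ}), and patches using the nonsingular uniqueness lemma on the collars (\autoref{lem:exist_sing}). Your ``interpolate and Moser'' on the overlaps is exactly that lemma.

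For uniqueness you take a genuinely different route from the paper, and there is a real gap. The paper does \emph{not} try to put $\xi_0$ and $\xi_1$ into a common normal form near $S(Y)$. Instead it builds, in one stroke, a bundle isomorphism $L_p:T_pM\to T_pM$ over all of $Y$ that is the identity on $T_pY$, equals the symplectic-basis matching on $Q=Y\setminus N_\epsilon(S)$, and on $N_\epsilon(S)$ \emph{interpolates} between the symplectic dual $f_n\mapsto g_n$ and the Reeb-direction identification $X_0\mapsto X_1$ via a cutoff $\chi(r)$ (Step~3 of \autoref{lem:unique_sing}). After Whitney extension this produces $\psi$ with $\psi|_Y=\id$, but $\psi^*\alpha_1$ and $\alpha_0$ do \emph{not} agree on $N_\epsilon(S)$ --- not even $d\lambda_0=d\lambda_1$ on $S$ in general. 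The substance of the argument is then \autoref{claim:path_of_contact}: a direct computation, using the explicit form of $L_p^{-1}(g_n)$ and the fact that both $d\lambda_0,d\lambda_1$ are positive area forms on the normal plane, showing that $(1-t)\alpha_0+t\psi^*\alpha_1$ is nonetheless contact along $Y$.

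Your alternative hinges on the assertion that the isotropic neighborhood theorem for $S\subset(M,\xi_i)$, together with the fact that the flat connection on the normal disk bundle is intrinsic to $\F$, forces both $\xi_0$ and $\xi_1$ into the \emph{same} local model near $S$. The isotropic neighborhood theorem indeed identifies a neighborhood of $S$ in $(M,\xi_i)$ with a standard model determined by the conformal symplectic normal bundle, and you are right that this bundle is $TY/TS$ with $[d\lambda_i]$; but the resulting contactomorphism has no reason to carry $Y$ to the standard coisotropic, and ``the flat connection is the same for both'' does not supply that reason. Producing a contactomorphism near $S$ that \emph{also} standardizes $Y$ is precisely the local uniqueness statement you are trying to prove, so the argument as written is circular. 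A secondary issue: the claim that $g=\lambda_1/\lambda_0$ extends smoothly across $S$ is plausible (and provable via a division argument using that each $\lambda_i$ is a CCL form vanishing only at the origin of each fiber with $d\lambda_i>0$), but ``proportional linearizations, since both define $\F$'' is not a proof --- the linear part of $\lambda_i$ at a point of $S$ need not determine the germ of $\F$ there, so proportionality of the $1$-jets is not automatic from the hypothesis alone.
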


It might appear that the condition on the singularity of $\F$ is strange. But in fact, it is almost a necessary and sufficient condition for $\F$ to be a Legendrian foliation in some contact manifold. The only condition that is possibly removable is the exclusion of open components of $S(Y)$.

\begin{remark}
Singularities in both plastikstufe and bLOB trivially satisfy the conditions in \autoref{thm:nbhd}. In fact a neighborhood theorem for plastikstufe is known to K. Niederkr{\"u}ger \cite{Nie}.
\end{remark}

\begin{remark}
If $\F$ is a nonsingular foliation, then the condition on the singularity is void. Therefore \autoref{thm:nbhd} produces a unique (germ of) contact structure with $\F$ as the Legendrian foliation. Following a remarkable result of Thurston \cite{Th1976}, a nonsingular codimension one foliation $\F$ exists on $Y$ if and only if the Euler characteristic $\chi(Y)=0$. It is an interesting question to understand the relationship between the dynamics of $\F$ and the contact germ.
\end{remark}

This note is the first of a series of up-coming papers devoted to understanding the role of coisotropic submanifolds in higher dimensional contact geometry, including convex hypersurface theory and higher dimensional bypasses. In particular, the holomorphic theory of obstructing symplectic fillings will be discussed in a sequel paper \cite{HHprep}.

The organization of this note is as follows. In Section \ref{sec:singular_loci} we study the characteristic foliation and its singularities of a coisotropic submanifold, and in particular, Legendrian foliations. Then we establish neighborhood theorems for nonsingular and singular Legendrian foliations separately in Section \ref{sec:nbhd_thm}. Neighborhood theorem for general coisotrpic submanifolds is also possible but the initial datum are more complicated, so we will not discuss that part in this note.

\vspace{5mm}
\noindent
{\em Acknowledgements}. The author would like to thank Ko Honda for inviting him to visit Stanford University in 2013, where most material in this note was firstly presented. Thanks also go to Jian Ge and Thomas Vogel for many inspiring conversations during this work. Finally the author is grateful to the Max Planck Institute for Mathematics in Bonn for providing an excellent environment for research.

\vspace{5mm}

\section{The singular loci of Legendrian foliations}\label{sec:singular_loci}

\subsection{Coisotropic submanifolds and singular loci} \label{subsec:coiso}
Let $(M^{2n+1},\xi)$ be a $(2n+1)$-dimensional contact manifold, and choose a contact 1-form $\alpha$ such that $\xi=\ker\alpha$. Let $Y^k \subset M$ be a closed $k$-dimensional submanifold. The case of compact $Y$ with suitable boundary conditions will also be briefly discussed (\textit{cf.} \autoref{lem:coiso_boundary}). Define $Y_\xi$ to be the (singular) {\em characteristic distribution} of $\xi$ in $TY$, namely, $Y_\xi(p)=T_p Y \cap \xi_p$ for all $p \in Y$. A point $p \in Y$ is {\em singular} if $T_p Y \subset \xi_p$. Let $\lambda=\alpha|_Y \in \Omega^1(Y)$ be the restriction of $\alpha$ to $Y$. Define $$S(Y):=\{p \in Y~|~\lambda(p)=0\}$$ to be the set of singular points in $Y$.

\begin{definition}\label{defn:coisotropic}
A submanifold $Y \subset M$ is {\em coisotropic} if for any $p \in Y$, $Y_\xi(p) \subset \xi_p$ is a coisotropic subspace with respect to the symplectic form $d\alpha$, i.e., $(Y_\xi(p))^{\bot_{d\alpha}} \subset Y_\xi(p)$ where $\bot_{d\alpha}$ is the symplectic orthogonal complement.
\end{definition}

It is easy to see that the above definition is independent of the choice of $\alpha$. Moreover, note that by definition $\dim(Y) \ge n$ and $\dim(Y)=n$ if and only if $Y$ is Legendrian and $Y_\xi$ is the tangential distribution.

For the rest of this note, we will be mainly interested in the study of coisotropic submanifold $Y$ of $\dim(Y)=n+1$. In this case Frobenius integrability theorem implies that $Y_\xi$ can be integrated to a Legendrian foliation, i.e., a foliation with Legendrian leaves, away from $S(Y)$. In this case, we will call $Y_\xi$ the {\em characteristic foliation} on $Y$. However the notion of characteristic distribution and characteristic foliation do not coincide in dimensions greater than $n+1$. More details will be discussed in Section \ref{subsec:coiso_higher}.

The main goal of this section is to prove the following

\begin{prop}\label{prop:sing}
Each path-connected component of the singular locus $S(Y) \subset Y$ is $C^\infty$-generically a $(n-1)$-dimensional submanifold without boundary if nonempty.
\end{prop}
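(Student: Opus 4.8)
The plan is to analyze the restricted $1$-form $\lambda := \alpha|_Y$ in a neighborhood of $S(Y)=\{\lambda=0\}$, show that coisotropy forces $\lambda$ into a simple normal form there, and then make the remaining degeneracies nondegenerate by a small isotopy. \emph{First, two pointwise consequences of coisotropy.} At a nonsingular point $p$ the characteristic distribution $Y_\xi(p)=\ker\lambda_p$ is an $n$-dimensional, hence Lagrangian, subspace of $(\xi_p,d\alpha)$, so $d\lambda_p$ vanishes on the hyperplane $\ker\lambda_p$; thus $\lambda\wedge d\lambda=0$ on $Y\setminus S(Y)$ and therefore on all of $Y$ (on the interior of $S(Y)$, where $\lambda\equiv 0$, this is trivial). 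Differentiating, $(d\lambda)^2=d(\lambda\wedge d\lambda)=0$, so $d\lambda$ has rank $\le 2$ everywhere. At a singular point $p$ we have $T_pY\subset\xi_p$, so $\ker d\lambda_p=T_pY\cap (T_pY)^{\bot_{d\alpha}}$, which by coisotropy equals $(T_pY)^{\bot_{d\alpha}}$, a subspace of dimension $2n-(n+1)=n-1$; hence $d\lambda_p$ has rank exactly $2$ at every point of $S(Y)$. By lower semicontinuity of the rank, $d\lambda$ then has constant rank $2$ on an open neighborhood $U$ of $S(Y)$.

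\emph{Next, the local normal form.} On $U$ the $2$-form $d\lambda$ is closed of constant rank $2$, so $\mathcal{K}:=\ker d\lambda$ is an involutive distribution of corank $2$ whose leaves have dimension $n-1$; locally we may pass to the leaf space, i.e.\ write a submersion $\pi\colon V\to\Sigma^2$ onto a surface with $\mathcal{K}=\ker d\pi$. The key point is that $\lambda$ is $\pi$-basic: for a section $X$ of $\mathcal{K}$ one has $\iota_X\lambda=0$ (this holds on $V\setminus S(Y)$ because $\ker d\lambda\subset\ker\lambda$ there, and trivially on $S(Y)$) and $\mathcal{L}_X\lambda=d(\iota_X\lambda)+\iota_X d\lambda=0$. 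Hence $\lambda=\pi^{\ast}\mu$ for a $1$-form $\mu$ on $\Sigma$ whose differential $d\mu=\pi_{\ast}(d\lambda)$ is a nowhere-vanishing area form, and consequently
\[
S(Y)\cap V=\pi^{-1}\bigl(\{\,q\in\Sigma:\mu_q=0\,\}\bigr).
\]
Thus everything reduces to the zero set of a $1$-form $\mu$ on a surface whose exterior derivative is an area form: once that zero set is discrete, $S(Y)$ is locally a disjoint union of fibres of $\pi$, hence an $(n-1)$-dimensional submanifold without boundary (fibres of a submersion have no boundary), and each path-connected component is a connected $(n-1)$-manifold, as claimed.

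\emph{Finally, genericity.} In a Darboux chart $(s,t)$ on $\Sigma$ with $d\mu=ds\wedge dt$ one may write $\mu=s\,dt+dg$, so $\{\mu=0\}$ is cut out by $g_s=0$ and $s+g_t=0$; a small perturbation of $g$ — equivalently, of $\mu$ by a small exact $1$-form — makes this system nondegenerate and hence its solution set discrete, in fact a finite set on the compact $S(Y)$. The remaining, and genuinely delicate, point is to realize such a perturbation of $\mu$ by an honest $C^\infty$-small isotopy of $Y$ \emph{through coisotropic submanifolds}: ordinary Thom transversality cannot be invoked directly because coisotropic embeddings form a constrained, non-open subfamily of all embeddings. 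I would resolve this via the local model for a coisotropic submanifold near its singular locus (the same normal form underlying the neighborhood theorems proved later in the paper), showing that coisotropic deformations of $Y$ supported near $S(Y)$ surject, to first order, onto perturbations of $\mu$ by small exact forms; a standard multijet-transversality argument over the finitely many charts covering $S(Y)$ then yields a single $C^\infty$-small coisotropic isotopy after which every local $\mu$ has only nondegenerate zeros, completing the proof. This coupling of the transversality argument with the coisotropic constraint is the main obstacle; granting it, Steps one and two apply verbatim to the perturbed $Y$ and give the conclusion.
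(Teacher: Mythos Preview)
Your structural argument---that $\lambda\wedge d\lambda=0$, that $d\lambda$ has rank exactly $2$ along $S(Y)$ and hence on a neighborhood, and that $\lambda$ is basic for the rank-$(n-1)$ foliation $\mathcal{K}=\ker d\lambda$---is correct and is in fact a more intrinsic version of the paper's proof. The paper works in a Darboux chart, writes $Y$ as a graph, and constructs by hand $n-1$ commuting vector fields $V_1,\dots,V_{n-1}$ tangent to $Y$ satisfying $\iota_{V_k}\alpha=0$, $\iota_{V_k}d\lambda=0$, and $[V_k,V_l]=0$; verifying these properties occupies several pages of explicit computation using the integrability equations for the graph. Your observation that these $V_k$ are nothing but a frame for $\ker d\lambda$, and that the three properties are simply ``$\lambda$ is $\mathcal{K}$-basic'', bypasses all of that. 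What the paper's explicit approach buys is the identification of the horizontal lift vector fields in coordinates, which feeds directly into the flat-bundle statement of \autoref{lem:flat_form}; your reduction to a surface form $\mu$ gives the same information more compactly.

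Where your proof is incomplete is exactly where you flag it: the genericity step. You correctly note that an abstract perturbation of $\mu$ by an exact $1$-form must be realized by a $C^\infty$-small isotopy of $Y$ through \emph{coisotropic} submanifolds, and you defer this to ``the local model \dots\ underlying the neighborhood theorems proved later in the paper'' together with a multijet argument. This is a genuine gap, and invoking the later neighborhood theorems risks circularity since those are stated for singular loci that are already assumed to be $(n-1)$-dimensional submanifolds. The paper handles this point concretely and without any appeal to transversality: if $S(Y)$ contains an $n$-dimensional piece $L$, then $L$ is Legendrian, and in the Legendrian neighborhood model $L=\{z=y_1=\cdots=y_n=0\}$, $Y=\{z=y_2=\cdots=y_n=0\}$ one replaces $Y$ by the graph of $z=z(y_1)$ with $z'(0)\neq 0$; a direct check of $\alpha\wedge d\alpha|_{T\Gamma}=0$ shows the deformed submanifold is still coisotropic, while $\lambda$ now has an isolated zero in the normal direction. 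You should replace your transversality sketch with this explicit perturbation, or at minimum exhibit one coisotropic deformation of $Y$ that moves $\mu$ by a prescribed small exact form; absent that, the argument as written does not close.
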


\begin{proof}
Suppose $S(Y)$ is nonempty, and therefore there exists $p \in Y$ such that $T_p Y \subset \xi_p$ is coisotropic. We first work locally in a Darboux chart. It follows from standard Moser's technique that there exists a neighborhood $U(p)$ of $p$ in $M$ contactomorphic to a neighborhood of the origin in $(\RR^{2n+1},\alpha)$, where $\alpha=dz-\sum_{i=1}^n y_idx_i$, such that $p$ is identified with $0\in\RR^{2n+1}$. Moreover we can assume that $U(p) \cap Y$ is identified with the graph $\Gamma(f)$ of a function
\begin{equation} \label{eqn:local_model}
f: \RR^{n+1}_{x_1,\cdots,x_n,y_n} \to \RR^n_{y_1,\cdots,y_{n-1},z}, \hspace{5mm}\text{where~} f(0)=0,~df(0)=0,
\end{equation}
in a neighborhood of the origin. It is convenient to write
\begin{gather*}
f(x_1,\cdots,x_n,y_n)=(y_1(x_1,\cdots,x_n,y_n),\cdots,y_{n-1}(x_1,\cdots,x_n,y_n),\\ z(x_1,\cdots,x_n,y_n))
\end{gather*}

Now we construct $n-1$ nonvanishing pointwise linearly independent vector fields in $\RR^{n+1}_{x_1,\cdots,x_n,y_n}$ as follows:
\begin{align*}
& \tilde V_1=\p_{x_1}-\frac{\p y_1}{\p y_n}\p_{x_n}+\frac{\p y_1}{\p x_n}\p_{y_n}, \\
& \cdots \\
& \tilde V_{n-1}=\p_{x_{n-1}}-\frac{\p y_{n-1}}{\p y_n}\p_{x_n}+\frac{\p y_{n-1}}{\p x_n}\p_{y_n},
\end{align*}
and consider their image in $\Gamma(f)$ given by
\begin{equation*}
\begin{split}
V_k=f_\ast(\tilde V_k) &=\p_{x_k}+\sum_{i=1}^{n-1}(\frac{\p y_i}{\p x_k}-\frac{\p y_k}{\p y_n}\frac{\p y_i}{\p x_n}+\frac{\p y_k}{\p x_n}\frac{\p y_i}{\p y_n})\p_{y_i}-\frac{\p y_k}{\p y_n}\p_{x_n} \\
& \quad +\frac{\p y_k}{\p x_n}\p_{y_n}+(\frac{\p z}{\p x_k}-\frac{\p z}{\p x_n}\frac{\p y_k}{\p y_n}+\frac{\p z}{\p y_n}\frac{\p y_k}{\p x_n})\p_z,
\end{split}
\end{equation*}
for $k=1,\cdots,n-1$. Therefore we have constructed $n-1$ nonvanishing pointwise linearly independent vector fields $\{V_1,\cdots,V_{n-1}\}$ on $\Gamma(f)$ in a neighborhood of the origin, and we claim the following is true.

\begin{claim} \label{claim:span_vf}
The vector fields $V_1,\cdots,V_{n-1}$ satisfies
\begin{enumerate}
    \item{$i_{V_k} \alpha=0, \hspace{2mm} \forall k \in \{1,\cdots,n-1\}$} \label{cond:isotropic}
    \item{$i_{V_k} d\lambda=0, \hspace{2mm} \forall k \in \{1,\cdots,n-1\},\text{ where } \lambda=\alpha|_{\Gamma(f)}\in\Omega^1(\Gamma(f))$;} \label{cond:preserve_sing}
    \item{$[V_k,V_l]=0, \hspace{2mm} \forall k,l \in \{1,\cdots,n-1\}$.} \label{cond:flat}
\end{enumerate}
\end{claim}

Using Cartan's formula, Claim~(\ref{cond:isotropic}) and~(\ref{cond:preserve_sing}) imply that
\begin{equation} \label{eqn:cov_const_form}
\mathcal{L}_{V_k} \lambda=0.
\end{equation}
In other words, the flow of $V_k$ preserves $\lambda$ for all $k$. The proof of the claim is a rather tedious but elementary calculation, so we postpone it to the end of the proof and continue to explain how the conclusions of the proposition (partially) follow from the claim. In fact Claim (\ref{cond:isotropic}) and~(\ref{cond:flat}) imply that $\vspan\{V_1,\cdots,V_{n-1}\}$ can be integrated to a $(n-1)$-dimensional isotropic foliation in a neighborhood of the origin in $\Gamma(f)$. Now~(\ref{eqn:cov_const_form}) and our assumption that $\lambda$ vanishes at $0$ imply that the leaf $S_0$ passing through $0$ is contained in $S(Y)$. Moreover, a tubular neighborhood of $S_0$, identified with a disk bundle on $S_0$, can be equipped with a flat connection such that the horizontal lifts of $S_0$ are exactly the leaves of the isotropic foliation, and it is such that $\lambda$ is ``covariant constant'' with respect to the flat connection. In fact such a construction determines the contact structure near the singular loci and will be discussed in more detail in Section~\ref{subsec:nbhd_sing}. For the moment the existence of such neighborhood guarantees that $S(Y)$ is  a submanifold of dimension at least $n-1$. Namely, the leaf passing through 0 gives a local chart diffeomorphic to $\RR^{n-1}$, and there can be neither self-intersections nor self-tangencies.

The contact condition asserts that $S(Y)$ cannot contain any open subsets of $Y$, so we only need to make sure a $n$-dimensional (not necessarily closed) submanifold $L \subset S(Y)$ can be perturbed away by a $C^\infty$-small isotopy of $Y$. Suppose such an $L$ exists, then it is Legendrian. Moreover the standard neighborhood theorem asserts that a tubular neighborhood $N(L)$ of $L$ in $M$ is contactomorphic to a neighborhood of $$\RR^n_{x_1,\cdots,x_n}=\{z=y_1= \cdots =y_n=0\}$$ in $(\RR^{2n+1},\alpha=dz-\sum_{i=1}^n y_idx_i)$, where $L$ is identified with $\RR^n_{x_1,\cdots,x_n}$ and $N(L) \cap Y$ is identified with $$\RR^{n+1}_{x_1,\cdots,x_n,y_1}=\{z=y_2=\cdots=y_n=0\},$$ such that the normal direction of $L$ in $Y$ is identified with $\p_{y_1}$. In this case, the restricted contact form $\lambda=\alpha|_{\RR^{n+1}_{x_1,\cdots,x_n,y_1}}=-y_1dx_1$, which vanishes exactly along $\{y_1=0\}$.

Consider a smooth function $g: \RR^{n+1}_{x_1,\cdots,x_n,y_1} \to \RR^n_{z,y_2,\cdots,y_n}$ defined by $$z=z(y_1), \hspace{2mm} y_2=\cdots=y_n=0.$$ We first show that the graph $\Gamma(g)$ of $g$ is also foliated by Legendrians. This is because $T\Gamma(g)=\vspan\{\p_{x_1},\cdots,\p_{x_n},\p_{y_1}+z'(y_1)\p_z\}$, and the only nontrivial thing to check is that
\begin{equation*}
\alpha \wedge d\alpha(\p_{x_k},\p_{x_1},\p_{y_1}+z'\p_z)=-y_k=0
\end{equation*}
for any $2 \leq k \leq n$.

Note that in this local model, the singular locus $S(\Gamma(g))=\{y_1=z'(y_1)=0\}$. Therefore we can choose a $C^\infty$-small function $z:\RR_{y_1} \to \RR$ such that it is compactly supported in a neighborhood of $0$ and $z'(0) \neq 0$. Then the Legendrian foliation becomes nonsingular. In the case when $L$ has nonempty boundary,  we can further require that $z|_{\p L}=0$ so that after perturbation only $\p L$ will be singular. A little care has to be taken in this case because $z$ will also depend on the coordinates on $L$ but it turns out that it does not affect our computation. By replacing $N(L) \cap Y$ with $\Gamma(g)$, we obtain the desired small isotopy which kills $L$ (or at least the interior of $L$) as a singular set.

Finally we wrap up the proof of \autoref{claim:span_vf}.
\begin{proof}[Proof of Claim~(\ref{cond:isotropic}) and~(\ref{cond:preserve_sing})]
We start by observing that
\begin{gather*}
T\Gamma(f)=\text{span}\{\p_{x_1}+\sum_{i=1}^{n-1}\frac{\p y_i}{\p x_1}\p_{y_i}+\frac{\p z}{\p x_1}\p_z,\cdots,\p_{x_n}+\sum_{i=1}^{n-1}\frac{\p y_i}{\p x_n}\p_{y_i}+\frac{\p z}{\p x_n}\p_z,\\ \p_{y_n}+\sum_{i=1}^{n-1}\frac{\p y_i}{\p y_n}\p_{y_i}+\frac{\p z}{\p y_n}\p_z\}.
\end{gather*}
By Frobenius integrability theorem and our assumption that $Y_\xi$ is a foliation, we have $$\alpha \wedge d\alpha|_{T\Gamma(f)}=dz \wedge (\sum_{i=1}^n dx_i \wedge dy_i)-\sum_{i=1}^n (y_i dx_i \wedge (\sum_{j \neq i} dx_j \wedge dy_j))|_{T\Gamma(f)}=0,$$ which is equivalent to the following set of equations:
\begin{align}
\label{eqn:foli1}
& (\frac{\p z}{\p x_a}-y_a)(\frac{\p y_b}{\p x_c}-\frac{\p y_c}{\p x_b}) - (\frac{\p z}{\p x_b}-y_b)(\frac{\p y_a}{\p x_c}-\frac{\p y_c}{\p x_a}) + (\frac{\p z}{\p x_c}- \\ & y_c)(\frac{\p y_a}{\p x_b}-\frac{\p y_b}{\p x_a})=0, \text{ for all } 1 \leq a,b,c \leq n-1; \nonumber \label{eqn:foli2} \\
& (\frac{\p z}{\p x_a}-y_a)\frac{\p y_b}{\p x_n} - (\frac{\p z}{\p x_b}-y_b)\frac{\p y_a}{\p x_n} + (\frac{\p z}{\p x_n}-y_n)(\frac{\p y_a}{\p x_b}-\frac{\p y_b}{\p x_a}) \\ & =0, \text{ for all } 1 \leq a,b \leq n-1; \nonumber \label{eqn:foli3} \\
& (\frac{\p z}{\p x_a}-y_a)\frac{\p y_b}{\p y_n} - (\frac{\p z}{\p x_b}-y_b)\frac{\p y_a}{\p y_n} + \frac{\p z}{\p y_n}(\frac{\p y_a}{\p x_b}-\frac{\p y_b}{\p x_a})=0, \\ & \text{ for all } 1 \leq a,b \leq n-1; \nonumber \label{eqn:foli4} \\
& \frac{\p z}{\p x_a} - (\frac{\p z}{\p x_n}-y_n)\frac{\p y_a}{\p y_n} + \frac{\p z}{\p y_n}\frac{\p y_a}{\p x_n} - y_a=0, \\ & \text{ for all } 1 \leq a \leq n-1, \nonumber
\end{align}
which are obtained by evaluating $\alpha \wedge d\alpha$ at all possible choices of three base vectors in $T\Gamma(f)$. Note that (\ref{eqn:foli1}) is redundant as it can be deduced from the other three equations but we include it here for completeness.

We first show that $\alpha(V_k)=0, \forall k\in\{1,\cdots,n-1\}$. This follows from a direct calculation as follows.
\begin{equation*}
\begin{split}
\alpha(V_k) &=(dz-\sum_{i=1}^n y_idx_i)(\p_{x_k}+\sum_{i=1}^{n-1}(\frac{\p y_i}{\p x_k}-\frac{\p y_k}{\p y_n}\frac{\p y_i}{\p x_n}+\frac{\p y_k}{\p x_n}\frac{\p y_i}{\p y_n})\p_{y_i} \\
& \quad -\frac{\p y_k}{\p y_n}\p_{x_n}+\frac{\p y_k}{\p x_n}\p_{y_n}+(\frac{\p z}{\p x_k}-\frac{\p z}{\p x_n}\frac{\p y_k}{\p y_n}+\frac{\p z}{\p y_n}\frac{\p y_k}{\p x_n})\p_z) \\
&=\frac{\p z}{\p x_k}-\frac{\p z}{\p x_n}\frac{\p y_k}{\p y_n}+\frac{\p z}{\p y_n}\frac{\p y_k}{\p x_n}-y_k+y_n\frac{\p y_k}{\p y_n} \\
&=0,
\end{split}
\end{equation*}
where the last equality follows from (\ref{eqn:foli4}).

Next we will show that $i_{V_k} d\alpha$ vanishes on $T\Gamma(f)$. To this end, we need the following identity
\begin{equation} \label{eqn:partial_y}
\Lambda_{a,b}:=\frac{\p y_a}{\p y_n}\frac{\p y_b}{\p x_n}-\frac{\p y_b}{\p y_n}\frac{\p y_a}{\p x_n}+\frac{\p y_a}{\p x_b}-\frac{\p y_b}{\p x_a}=0,
\end{equation}
for any $1 \leq a,b \leq n-1$. To prove (\ref{eqn:partial_y}), we plug (\ref{eqn:foli4}) into (\ref{eqn:foli2}) and (\ref{eqn:foli3}) and simplify to get
$$(\frac{\p z}{\p x_n}-y_n)(\frac{\p y_a}{\p y_n}\frac{\p y_b}{\p x_n}-\frac{\p y_b}{\p y_n}\frac{\p y_a}{\p x_n}+\frac{\p y_a}{\p x_b}-\frac{\p y_b}{\p x_a})=0$$
and
$$\frac{\p z}{\p y_n}(\frac{\p y_a}{\p y_n}\frac{\p y_b}{\p x_n}-\frac{\p y_b}{\p y_n}\frac{\p y_a}{\p x_n}+\frac{\p y_a}{\p x_b}-\frac{\p y_b}{\p x_a})=0$$
respectively. Arguing by contradiction, suppose (\ref{eqn:partial_y}) is not identically zero. Then by continuity there exists an open set $U$ on which $\Lambda \neq 0$. Therefore we have $\frac{\p z}{\p x_n}-y_n=\frac{\p z}{\p y_n}=0$ on $U$, but this is impossible because
$$1=\frac{\p y_n}{\p y_n}=\frac{\p^2 z}{\p y_n \p x_n}=\frac{\p}{\p x_n}(\frac{\p z}{\p y_n})=0,$$
therefore $\Lambda_{a,b}$ must be constantly equal to 0.

With this preparation, now we can show that
\begin{align*}
0 &= d\alpha(V_k,\p_{x_l}+\sum_{i=1}^{n-1}\frac{\p y_i}{\p x_l}\p_{y_i}+\frac{\p z}{\p x_l}\p_z) \\
  &= d\alpha(V_k,\p_{y_n}+\sum_{i=1}^{n-1}\frac{\p y_i}{\p y_n}\p_{y_i}+\frac{\p z}{\p y_n}\p_z),
\end{align*}
for all $1 \leq l \leq n$. Indeed, the first equality follows from
\begin{equation*}
d\alpha(V_k,\p_{x_l}+\sum_{i=1}^{n-1}\frac{\p y_i}{\p x_l}\p_{y_i}+\frac{\p z}{\p x_l}\p_z) =
    \begin{cases}
        \Lambda_{k,l}=0 & \text{if } 1 \leq l \leq n-1, \\
        \frac{\p y_k}{\p x_n}-\frac{\p y_k}{\p x_n}=0 & \text{if } l=n,
    \end{cases}
\end{equation*}
and the second equality follows from
$$d\alpha(V_k,\p_{y_n}+\sum_{i=1}^{n-1}\frac{\p y_i}{\p y_n}\p_{y_i}+\frac{\p z}{\p y_n}\p_z)=\frac{\p y_k}{\p y_n}-\frac{\p y_k}{\p y_n}=0.$$
This completes the proof of Claim~(\ref{cond:isotropic}) and (\ref{cond:preserve_sing}).
\end{proof}
\begin{proof}[Proof of Claim~(\ref{cond:flat})]
Since $[V_k,V_l]=[f_\ast(\tilde V_k),f_\ast(\tilde V_l)]=f_\ast[\tilde V_k,\tilde V_l]$, it suffices to show $[\tilde V_k,\tilde V_l]=0$ for any $0 \leq k,l \leq n-1$. This is done, again, by explicit calculation as follows.
\begin{equation*}
	\begin{split}
		[\tilde V_k,\tilde V_l] &=[\p_{x_k}-\frac{\p y_k}{\p y_n}\p_{x_n}+\frac{\p y_k}{\p x_n}\p_{y_n},\p_{x_l}-\frac{\p y_l}{\p y_n}\p_{x_n}+\frac{\p y_l}{\p x_n}\p_{y_n}] \\
					        &=\frac{\p \Lambda_{k,l}}{\p y_n} \p_{x_n}-\frac{\p \Lambda_{k,l}}{\p x_n}\p_{y_n}=0,
	\end{split}
\end{equation*}
where $\Lambda_{k,l}$ is as defined in (\ref{eqn:partial_y}).
\end{proof}
This completes the proof the proposition.
\end{proof}

\begin{remark}
The conclusions of \autoref{prop:sing} also hold for compact $Y$ with Legendrian boundary (\textit{cf.} \autoref{lem:coiso_boundary}).
\end{remark}

\begin{remark}
By definition $S(Y)$ is a closed subset of $Y$, but it is possible that some components of $S(Y)$ are open submanifolds. The limiting behavior of open components of $S(Y)$ onto a closed component is currently unclear.
\end{remark}

\subsection{Germs of contact structure near the singular loci} \label{subsec:nbhd_sing}

In this section we will take a closer look at the (germ of) contact structures in a tubular neighborhood of $S(Y)$ in $Y$. Often we will also write $S$ for $S(Y)$ when the ambient manifold is implicit. For simplicity we will assume in this section that $S$ is a connected closed $(n-1)$-dimensional submanifold of $Y$.

We start by reviewing some standard knowledge on connections on fiber bundles with the setup adapted to our purposes. Readers who are interested in more general theory of connections on fiber bundles are referred to \cite{Ehr1995}, \cite{Mor2001}.

Let $$D \to E \to B$$ be a disk bundle over $B$, where $B$ is a closed manifold and $D \subset \RR^2$ is the unit open disk. We will always abuse notations by identifying $B$ with $B \times \{0\} \subset E$ which is the $0$-section of $E$. We say $E$ is {\em vertically oriented} is each fiber is oriented.

\begin{remark}
The notion of being vertically oriented is equivalent to the 0-section $B \subset E$ being {\em co-oriented}. We will use the term ``co-oriented'' when we talk about submanifolds.
\end{remark}

\begin{remark}
The only reason we do not consider such $E$ as a rank 2 vector bundle is that the parallel transport, to be defined below, is not a linear map in general.
\end{remark}

An {\em Ehresmann connection}, or {\em connection} in brief, $A$ on $E$ is a pointwise decomposition $$T_pE=V_p \oplus H_p$$ which is smoothly varying with $p \in E$, and such that $V_p=T_p D$ for all $p \in E$, and $H_p=T_p B$ for all $p \in B$. In the literature, the subbundle $V$ is called the {\em vertical distribution} and the subbundle $H$ is called the {\em horizontal distribution}.  A connection $A$ is {called \em flat} if $H$ is integrable. A bundle $E$ is {\em flat} if it admits a flat connection. In this case, let $\h$ be the foliation on $E$ obtained by integrating $H$. In particular $B$ is a closed leaf of $\h$. It is easy to see that all the leaves of $\h$ are transverse to the fibers, and the projection from a leaf to $B$ is a covering map, at least near the 0-section. We use this observation to define a notion of parallel transport as follows.

\begin{definition}[Parallel transport]
For any path $\gamma:[0,1] \to B$, the {\em parallel transport} along $\gamma$ is a (partially defined) smooth map $$\Phi_\gamma: E_{\gamma(0)} \to E_{\gamma(1)}$$ such that $\Phi_\gamma(x)=\tilde\gamma(1) \in E_{\gamma(1)}$, where $x \in E_{\gamma(0)}$ and $\tilde\gamma:[0,1] \to E$ is the horizontal lift of $\gamma$ starting at $x$ using the covering map.
\end{definition}

Note that since our fiber is not closed, the above parallel transport may not be defined everywhere on $E$, but it is a diffeomorphism whenever it is defined. In fact, shrinking $D$ to a small disk around the origin if necessary, we can still define the holonomy representation $$\Hol: \pi_1(B) \to \diff(D)$$ using the parallel transport. Here $\diff D$ denotes the set of orientation-preserving self-diffeomorphisms of $D$ which fixes the origin.

Along the same lines, we define the covariant derivative associated with a flat connection $A$ as follows.

\begin{definition}[Covariant derivative]
Given any vector field $X$ on $B$, we define $\nabla_X: \Omega^k(E) \to \Omega^k(E)$ by $$\nabla_X \beta:=\LL_{\tilde X} \beta$$ for any $k$-form $\beta \in \Omega^k(E)$, where $\tilde X$ is the horizontal lift of $X$ on $E$.
\end{definition}

With the above preparations, now we consider the singular locus $S \subset Y$, which we assume to be a connected closed submanifold of codimension 2. A tubular neighborhood $N(S) \subset Y$ of $S$ can be identified with a disk bundle over $S$, which we will also denote by $N(S)$. The proof of \autoref{prop:sing} yields the following observation about the restricted contact form on $N(S)$.

\begin{lemma} \label{lem:flat_form}
Let $\lambda=\alpha|_{N(S)}$ be the restricted contact form as before. Then $N(S)$ is a vertically oriented flat disk bundle on $S$ such that $\lambda$ is covariant constant, i.e., $\nabla_X \lambda=0$ for any vector field $X$ on $S$.
\end{lemma}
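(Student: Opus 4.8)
The plan is to extract Lemma~\ref{lem:flat_form} essentially for free from the local analysis already carried out in the proof of \autoref{prop:sing}, globalizing it via \autoref{claim:span_vf}. First I would recall that near any point $p \in S = S(Y)$ the proof of \autoref{prop:sing} produced, in a Darboux chart, $n-1$ pointwise linearly independent vector fields $V_1,\dots,V_{n-1}$ tangent to $\Gamma(f) = Y \cap U(p)$ satisfying $i_{V_k}\alpha = 0$, $i_{V_k}d\lambda = 0$, and $[V_k,V_l]=0$. Since $\lambda$ vanishes along $S$ and $\LL_{V_k}\lambda = i_{V_k}d\lambda + d(i_{V_k}\lambda) = 0$, the flows of the $V_k$ preserve $\lambda$, and the leaf through $p$ of the foliation integrating $\vspan\{V_1,\dots,V_{n-1}\}$ stays inside $S$; this exhibits $S$ locally as $\RR^{n-1}$ and shows that $\vspan\{V_1,\dots,V_{n-1}\}$ restricted to $S$ is exactly $TS$. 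The point of the lemma is to upgrade this to a genuine \emph{global} flat connection on the disk bundle $N(S)$ and to record that $\lambda$ is covariant constant for it.

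The key steps, in order, would be: (i) observe that away from $S$ the characteristic foliation $Y_\xi$ is a genuine $(n+1-1)=n$-dimensional Legendrian foliation; more relevantly, the $(n-1)$-dimensional isotropic distribution $\mathcal{D} := \vspan\{V_1,\dots,V_{n-1}\}$ constructed in each chart is intrinsically defined: $\mathcal{D}_q = \{ v \in T_q Y : i_v\lambda = 0,\ i_v d\lambda = 0\}$, i.e.\ the ``kernel'' of $\lambda$ inside $TY$, which is chart-independent. Over $S$ itself $\lambda = 0$ and $d\lambda|_S$ has a two-dimensional kernel complementary to $TS$ (this uses the coisotropic/contact condition, which forces $d\lambda$ to be nondegenerate on the normal $2$-plane); over a punctured neighborhood $\mathcal D$ is $(n-1)$-dimensional and, by Claim~\ref{cond:flat}, integrable. (ii) Define the horizontal distribution $H$ on $N(S)$ to be $\mathcal{D}$ itself (extended over $S$ by $TS$); since the leaves of $\mathcal{D}$ project to $S$ as covering maps near the zero section, this is an Ehresmann connection in the sense of Section~\ref{subsec:nbhd_sing}, and it is flat because $[\mathcal{D},\mathcal{D}]\subset\mathcal{D}$ (Claim~\ref{cond:flat}). (iii) Vertical orientation: the normal $2$-plane bundle is co-oriented by hypothesis (we assumed $S(Y)$ is co-oriented), equivalently the fibers of $N(S)$ are oriented, so $N(S)$ is vertically oriented. (iv) Covariant constancy: for a vector field $X$ on $S$ with horizontal lift $\tilde X$ (a local combination of the $V_k$'s, since $H=\mathcal{D}=\vspan\{V_k\}$), we have $\nabla_X\lambda = \LL_{\tilde X}\lambda = \LL_{V\text{-combination}}\lambda = 0$ by \eqref{eqn:cov_const_form}; one only needs to check this is well defined, i.e.\ independent of the local trivialization, which follows because $\mathcal{D}$ and $\lambda$ are both intrinsic.

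I expect the main obstacle to be step (i)--(ii): verifying that the locally constructed $V_k$'s patch together to a \emph{globally} defined flat connection, rather than just a collection of local data. The subtlety is that $\diff(D)$-valued holonomy is nonlinear, so one cannot invoke vector-bundle gluing; instead I would argue that the distribution $\mathcal{D}$ has an intrinsic description independent of the Darboux chart, hence the local integrable distributions automatically agree on overlaps and glue to a global integrable $H \subset TN(S)$ transverse to the fibers near the zero section. A second, minor, point needing care is the behavior precisely on $S$ where $\lambda \equiv 0$: there $\mathcal{D}$ as defined by the kernel conditions jumps to dimension $n+1$, so one must instead define $H|_S := TS$ and check smoothness of the extension across $S$ --- this follows from the explicit formulas for the $V_k$, whose coefficients extend smoothly across $\{\lambda=0\}$ in the local model. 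Everything else is a direct transcription of \autoref{claim:span_vf} and \eqref{eqn:cov_const_form} into the language of Ehresmann connections from Section~\ref{subsec:nbhd_sing}.
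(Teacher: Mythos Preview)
Your approach is essentially the paper's own, and is correct in outline. Two small points where the paper is cleaner: first, the horizontal distribution is defined directly as $\ker(d\lambda)$, which is uniformly $(n-1)$-dimensional on all of $N(S)$ (since $d\lambda$ has rank~$2$ everywhere, being the restriction of $d\alpha$ to an $(n+1)$-dimensional coisotropic subspace of $\xi$); this sidesteps the dimension-jump issue you flag in step~(ii), since your extra condition $i_v\lambda=0$ is automatically satisfied on $\ker(d\lambda)$ by \autoref{claim:span_vf}(\ref{cond:isotropic}). Second, the vertical orientation in step~(iii) is not a hypothesis here but part of the \emph{conclusion}: it is furnished by $d\lambda$ itself, which restricts to a nondegenerate (hence orientation-inducing) $2$-form on each fiber. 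With those adjustments your argument matches the paper's.
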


\begin{proof}
It follows from \autoref{claim:span_vf} (\ref{cond:preserve_sing}) and (\ref{cond:flat}) that $\ker(d\lambda)$ defines a (codimension 2) foliation $\F$ on $N(S)$ such that $S$ is a closed leaf of $\F$. In particular $d\lambda$ is nondegenerate in the fiber direction and defines an orientation on it. Therefore by definition $N(S)$ is a vertically oriented flat disk bundle with the horizontal distribution given by $\F$. The fact that $\lambda$ is covariant constant with respect to this flat connection follows from (\ref{eqn:cov_const_form}).
\end{proof}

For later purposes, we also want to look at the converse to \autoref{lem:flat_form}, i.e., to construct (germs of) contact structures from a flat disk bundle. To this end, we introduce a so-called {\em covariant constant Liouville} (CCL) 1-form on $D \subset \RR^2$ as follows.

\begin{definition}\label{defn:good}
A 1-form $\beta$ on $D$ is {\em CCL} with respect to a flat connection on the disk bundle $D \to E \to B$ if the following holds:
    \begin{enumerate}
        \item $\beta$ is invariant under the action of $\Hol(\pi_1(B)) \subset \diff(D)$, \label{good_1}
        \item $\beta=0$ exactly at $0 \in D$, \label{good_2}
        \item $d\beta>0$ on $D$ with respect to the given orientation. \label{good_3}
    \end{enumerate}
\end{definition}

\begin{exam}
Suppose $E$ is a trivial flat bundle, namely, the holonomy $\Hol(\pi_1(B))=\text{id}$. Then $\beta=xdy-ydx$ on $\RR^2$ is a CCL 1-form. Following \autoref{lem:contact_germ} below, one can construct a contact structure on a bundle over $E$ such that $E$ is a Legendrian foliation with a ``parameterized elliptic singularity'', which shows up in the construction of both plastikstufe and bLOB.
\end{exam}

\begin{remark}
In general a CCL 1-form may not exist in a given flat disk bundle. In fact, even a nontrivial covariant constant 1-form does not always exist in general. It is an interesting question to find certain conditions on the holonomy group that guarantees the existence of CCL 1-forms.
\end{remark}

Now we state the converse to \autoref{lem:flat_form}. Let $\pi:\E \to E$ be a vector bundle on $E$ with fiber $\RR \times T^\ast \h$, where $\pi$ is the projection map and $\h$ is the horizontal foliation on $E$. We will denote by $p_\ast: TE \to T\h$ the horizontal projection map, thinking of $T\h$ as the horizontal distribution of a flat connection.

\begin{lemma}\label{lem:contact_germ}
Suppose $\beta$ is a CCL 1-form as defined above. Then there exists a contact structure $\xi$ on $\E$ such that $E$ is foliated by Legendrians with respect to $\xi$. Moreover $B$ is a co-oriented singular locus in $E$.
\end{lemma}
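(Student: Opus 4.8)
The plan is to build the contact form on $\E$ explicitly by combining the CCL 1-form $\beta$ on the disk fiber $D$ with a canonical Liouville-type form in the $\RR \times T^\ast\h$ directions, then check the contact condition and the Legendrian-foliation condition by a fiberwise computation. First I would set up local coordinates: since $\h$ is the horizontal foliation of a flat connection, near any point of $E$ we may choose coordinates $(q,t) = (q_1,\dots,q_{n-1},t)$ adapted to $\h$, with $t \in D$ the fiber coordinate on which the holonomy $\Hol(\pi_1(B))$ acts and $q$ the leafwise (horizontal) coordinates; here $B = \{t = 0\}$ and $\dim B = n-1$. Let $(p_1,\dots,p_{n-1})$ be the dual fiber coordinates on $T^\ast\h$ and let $s$ be the coordinate on the $\RR$-factor of $\E$. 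The candidate contact form is
\begin{equation*}
\alpha \;=\; ds - \textstyle\sum_{i=1}^{n-1} p_i\, dq_i \;+\; \pi^\ast\beta,
\end{equation*}
where $\pi^\ast\beta$ is the pullback of $\beta$ (regarded as a form in the fiber variable $t$) under the projection $\E \to E$ and thence $E \to D$ along the foliation, using $p_\ast$ to interpret the construction invariantly. The key point justifying that this is globally well defined on $\E$ is precisely condition (\ref{good_1}) in \autoref{defn:good}: $\beta$ is $\Hol(\pi_1(B))$-invariant, so it descends to a well-defined form on the disk-bundle $E$, and the remaining terms $ds - \sum p_i\,dq_i$ are the tautological data on $\RR \times T^\ast\h$, which are intrinsic once the flat connection (hence the leafwise cotangent bundle $T^\ast\h$) is fixed.

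Next I would verify the contact condition $\alpha \wedge (d\alpha)^n > 0$ near the zero section. We have $d\alpha = -\sum_i dp_i \wedge dq_i + d\beta$, and $d\beta = \rho(t)\,dt_1 \wedge dt_2$ with $\rho > 0$ by condition (\ref{good_3}), writing $t = (t_1,t_2) \in D \subset \RR^2$. Then $(d\alpha)^n$ expands as a sum of wedge products; the top-form part pairing with $ds$ in $\alpha$ picks out the term $n!\,\rho\,(dp_1\wedge dq_1)\wedge\cdots\wedge(dp_{n-1}\wedge dq_{n-1})\wedge dt_1\wedge dt_2$ up to sign, and one checks $\alpha \wedge (d\alpha)^n = \pm\, n!\,\rho \; ds\wedge dp_1\wedge dq_1 \wedge\cdots\wedge dp_{n-1}\wedge dq_{n-1}\wedge dt_1 \wedge dt_2$, which is a nowhere-vanishing top form on $\E$ (note $\dim\E = 1 + 2(n-1) + 2 = 2n+1$, as it must be). Here it is essential that $d\beta$ is nondegenerate on all of $D$, not merely at the origin, so the contact condition persists in a full neighborhood of the zero section $E$.

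Then I would identify the zero section $E = \{s = 0,\ p = 0\}$ inside $\E$ and show $(E,\h)$ — more precisely $E$ with the foliation whose leaves are the horizontal leaves, which restrict to Legendrians — is a Legendrian foliation. On $E$ we have $TE = \vspan\{\partial_{q_i},\ \partial_{t_1},\partial_{t_2}\}$ and $\alpha|_E = \beta$ (the $ds$ and $\sum p_i\,dq_i$ terms vanish since $s = p = 0$ on $E$ and $dp_i$ annihilates $TE$). So $\alpha|_E = \beta$ vanishes exactly along $B = \{t = 0\}$ by condition (\ref{good_2}), showing $S(E) = B$ and that $B$ is co-oriented (the co-orientation coming from $d\beta > 0$, equivalently the vertical orientation of the disk bundle). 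Away from $B$, a leaf of $\h$ is cut out in $E$ by $\{t = \text{const}\}$, on which $\alpha$ restricts to zero since $\beta$ is a form purely in $dt_1, dt_2$; each such leaf is $(n-1)$-dimensional, which together with the vanishing $\alpha|_{\text{leaf}} = 0$ would only give isotropic, so I need to enlarge: the Legendrian leaves should be the graphs $\{p_i = \partial\beta/\partial q_i\text{-type sections}\}$ inside $\E$ over the horizontal leaves together with an extra direction — more carefully, the correct statement is that $\h$ lifts to an $n$-dimensional foliation of the zero section obtained by integrating $\vspan\{\partial_{q_i}\} \oplus \ker(d\beta|_{T\text{-fiber direction transverse to }0})$; since $\ker d\beta$ in the fiber is trivial at a regular level only up to the $1$-dimensional radial-type direction, I would instead take the leaves through points $t \neq 0$ to be $(n-1)+1 = n$-dimensional, spanned by the $\partial_{q_i}$ and the kernel direction of $\beta$ itself (not $d\beta$) inside the fiber. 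Verifying $\alpha$ restricted to these $n$-planes vanishes uses $\beta \wedge (\text{kernel vector}) = 0$ by construction of the kernel foliation of $\beta$ on $D\setminus\{0\}$, which is $1$-dimensional and well-defined since $\beta$ is nonvanishing there.

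The main obstacle I anticipate is exactly this last point: organizing the Legendrian leaves cleanly and invariantly, i.e., checking that the $1$-dimensional kernel foliation of $\beta$ on $D \setminus \{0\}$ assembles with the horizontal foliation $\h$ into a genuine (singular) Legendrian foliation on the zero section $E$ that extends across $B$, and that this is compatible with the holonomy (here invariance of $\beta$, condition (\ref{good_1}), is used a second time to ensure $\ker\beta$ descends to the bundle). The contact-condition computation and the global-well-definedness are routine once coordinates are fixed; the delicate bookkeeping is matching the singular behavior at $B$ with the notion of Legendrian foliation from Section \ref{subsec:coiso}, and confirming that the construction recovers \autoref{lem:flat_form} in reverse — that the characteristic foliation of $\xi$ on $E$ is precisely $\h$ (off $B$) with singular locus $B$ carrying the given flat structure. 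I would close by noting that the bundle $E \hookrightarrow \E$ can be further pulled back to the rank-$n$ vector bundle promised in \autoref{thm:nbhd} by absorbing the $D$-directions, but that identification belongs to the proof of the existence half of \autoref{thm:nbhd} rather than here.
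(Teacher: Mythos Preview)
Your construction is exactly the paper's: both build $\alpha = dz + \tilde\beta - \eta$ (in your notation $ds - \sum p_i\,dq_i + \pi^\ast\beta$), invoke condition~(\ref{good_1}) of \autoref{defn:good} for global well-definedness of $\tilde\beta$, and verify the contact condition in a foliated chart. The computation you sketch for $\alpha\wedge(d\alpha)^n$ is correct.

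Where you diverge is the final paragraph, and there you are manufacturing difficulty that is not present. The paper dispatches the Legendrian-foliation assertion in one line: since $\alpha|_E=\hat\beta$, the characteristic foliation on $E$ is $\ker\hat\beta$, which by conditions~(\ref{good_2}) and~(\ref{good_3}) has $B$ as its co-oriented singular locus. You do not need to ``organize the Legendrian leaves'' by hand or worry about assembling $\ker\beta$ with $\h$: the characteristic foliation is \emph{defined} as $\ker(\alpha|_E)$, so once you know $\alpha|_E=\hat\beta$ you are done with its identification. That the leaves are Legendrian (not merely isotropic) follows because locally $\hat\beta$ depends only on the two fiber variables, so $\hat\beta\wedge d\hat\beta$ is a $3$-form in two variables, hence zero; equivalently, the coisotropy condition $\alpha\wedge d\alpha|_E=0$ holds automatically. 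Your detour through ``graphs $\{p_i=\partial\beta/\partial q_i\}$'' and ``$\ker(d\beta)$'' is headed in the wrong direction before you self-correct to $\ker\beta$; simply drop that discussion and state $\alpha|_E=\hat\beta$ as the paper does.
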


\begin{proof}
We first construct a 1-form $\eta$ on $\E$ which is similar to the canonical 1-form on cotangent bundle as follows. At each $(e,z,v) \in \E$, where $e \in E, z \in \RR, v \in T^\ast_e \h$, we define $\eta(w):=v(p_\ast(\pi_\ast w))$.

Next we construct a 1-form $\tilde \beta$ on $\E$ by parallel transporting $\beta$ to get a 1-form $\hat\beta$ on $E$, and then pull it back to $\E$ using the projection map $\pi$. Note that $\hat\beta$ is globally defined on $E$ by \autoref{defn:good} (\ref{good_1}), and moreover it is covariant constant by construction.

Finally we claim $\alpha:=dz+\tilde\beta-\eta$ is a contact form on $\E$ which satisfies all the desired conditions.  To check the contact condition, we work locally by choosing a (foliated) chart $$\phi: \RR^{n+1}_{x_1,\cdots,x_{n-1},s,t} \to U(p) \subset E$$ for $p \in B$, such that $\phi^\ast(\h)$ is the trivial foliation on $\RR^{n+1}$ with leaves $\{s=\const,t=\const\}$. Moreover $\sigma(s,t):=\phi^\ast(\hat\beta)$ is independent of the $x_1,\cdots,x_{n-1}$ variables, and $d\sigma$ is nondegenerate in the $(s,t)$-plane. Now $\phi$ induces a chart $$\psi: \RR^{2n+1}_{x_1,\cdots,x_{n-1},s,t,y_1,\cdots,y_{n-1},z} \to \U(p) \in \E$$ where $y_1,\cdots,y_{n-1}$ are the dual coordinates to $x_1,\cdots,x_{n-1}$ on $T^\ast \h$, and $z$ is the coordinate on $\RR$ as before. It is easy to see that $$\psi^\ast(\alpha)=dz+\sigma-\sum_{i=1}^{n-1} y_idx_i$$ is contact in this local chart. Therefore $\alpha$ is a contact form on $\E$.

The assertion that $E$ is foliated by Legendrians and $B$ is a singular locus is now obvious by construction because $\alpha|_E=\hat\beta$.
\end{proof}

As an application of our understanding of the restricted contact form near the singular loci, we briefly discuss here the case of compact Legendrian foliated submanifold $Y \subset M$ with Legendrian boundary.
\begin{lemma} \label{lem:coiso_boundary}
Let $Y \subset M$ be a compact coisotropic submanifold of $M$ with Legendrian boundary. Suppose $S(Y) \subset Y$ is a $(n-1)$-dimensional submanifold. Then each connected component of $S(Y)$ is either contained in $\p Y$ or contained in the interior of $Y$.
\end{lemma}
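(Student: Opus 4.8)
The plan is to show that the two subsets $S(Y)\cap\operatorname{int}(Y)$ and $S(Y)\cap\partial Y$ are each open \emph{and} closed in $S(Y)$; since $S(Y)$ is a submanifold this forces every connected component into one of the two pieces, which is exactly the assertion. Now $S(Y)\cap\operatorname{int}(Y)$ is trivially open in $S(Y)$ (as $\operatorname{int}(Y)$ is open in $Y$) and $S(Y)\cap\partial Y$ is trivially closed in $S(Y)$ (as $\partial Y$ is closed in $Y$). So everything reduces to proving that $S(Y)\cap\operatorname{int}(Y)$ is closed in $S(Y)$, i.e. that there is no sequence $p_m\in S(Y)\cap\operatorname{int}(Y)$ with $p_m\to p$ for some $p\in S(Y)\cap\partial Y$; I will derive a contradiction from the existence of such a sequence.

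The argument is a local computation at such a boundary point $p$. Since $\partial Y$ is Legendrian, the standard Legendrian neighborhood theorem gives a Darboux chart near $p$ in which $\alpha=dz-\sum_{i=1}^n y_i\,dx_i$ and $\partial Y=\RR^n_{x_1,\dots,x_n}=\{y_1=\dots=y_n=z=0\}$, with $p=0$. Because $p\in S(Y)$ we have $T_pY\subset\xi_p$, and since $T_p\partial Y\subset T_pY$ is Legendrian (hence self-orthogonal with respect to $d\alpha$), one gets $T_pY=\operatorname{span}\{\partial_{x_1},\dots,\partial_{x_n}\}\oplus\RR v$ with $v$ having non-zero $y$-component; after a linear contactomorphism preserving $\partial Y$ we may assume $T_pY=\operatorname{span}\{\partial_{x_1},\dots,\partial_{x_n},\partial_{y_1}\}$. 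Then, near $p$, $Y$ is the graph over a half-space $\{y_1\ge 0\}\subset\RR^{n+1}_{x_1,\dots,x_n,y_1}$ (up to replacing $y_1$ by $-y_1$) of functions $y_j=y_1\tilde y_j$ for $2\le j\le n$ and $z=y_1\tilde z$ — the common factor $y_1$ appears precisely because $\{y_1=0\}$ must be carried onto $\partial Y$ — and the normalization $df(0)=0$ forces $\tilde z(0)=\tilde y_j(0)=0$. A direct computation of $\lambda=\alpha|_Y$ then yields
\[
\lambda=(\tilde z+y_1\,\partial_{y_1}\tilde z)\,dy_1+y_1(\partial_{x_1}\tilde z-1)\,dx_1+\sum_{j=2}^{n}y_1(\partial_{x_j}\tilde z-\tilde y_j)\,dx_j,
\]
so along $S(Y)\cap\{y_1\ne 0\}$ one necessarily has $\partial_{x_1}\tilde z=1$. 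Passing to the limit along $p_m$ gives $\partial_{x_1}\tilde z(0)=1$.

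Consequently the function $x\mapsto\tilde z(x_1,\dots,x_n,0)$ on $\partial Y$ has non-vanishing differential at $0$, so near $p$ its zero set $H$ is a smooth $(n-1)$-dimensional submanifold of $\partial Y$; and setting $y_1=0$ in the displayed formula shows $\lambda$ vanishes on $\partial Y$ exactly along $H$, so $H=S(Y)\cap\partial Y$ near $p$. Now $H$ and $S(Y)$ are both $(n-1)$-dimensional submanifolds (without boundary, as provided by \autoref{prop:sing}) with $H\subset S(Y)$, hence $H$ is open in $S(Y)$. But $p\in H$ while $p_m\in S(Y)\setminus H$ (since $p_m\in\operatorname{int}(Y)$ and $H\subset\partial Y$) and $p_m\to p$, contradicting the openness of $H$ in $S(Y)$. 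Therefore $S(Y)\cap\operatorname{int}(Y)$ is closed in $S(Y)$, and the lemma follows.

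The step I expect to be the main obstacle is setting up and exploiting the adapted local normal form: one must use the Legendrian condition on $\partial Y$ to see that, in the Darboux chart, $Y$ is a graph in which $\partial Y$ is cut out by the single base coordinate $y_1$ (so the graphing functions are divisible by $y_1$), and then observe that the very constraint producing interior singular points near $p$, namely $\partial_{x_1}\tilde z(0)=1$, simultaneously forces $S(Y)\cap\partial Y$ to be top-dimensional inside $S(Y)$ — something an honest $(n-1)$-submanifold $S(Y)$ cannot accommodate unless those interior singular points are in fact absent near $p$. The remaining computations (the formula for $\lambda$, and the Claim-type identities underlying it) are routine and parallel to those in the proof of \autoref{prop:sing}.
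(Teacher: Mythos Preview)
Your proof is correct, but it proceeds along a genuinely different route from the paper's.

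The paper's argument is short and structural: it invokes the flat normal bundle description of $S(Y)$ established in the proof of \autoref{prop:sing} and in \autoref{lem:flat_form}. Two cases are ruled out separately. First, a component of $S(Y)$ cannot be tangent to $\partial Y$ without being contained in it, because $S(Y)$ is a leaf of the horizontal isotropic foliation $\h=\ker(d\lambda)$, and this foliation is tangent to the Legendrian $\partial Y$. Second, a component of $S(Y)$ cannot meet $\partial Y$ transversally, because at such a point one could choose a $2$--plane complementary to $T_pS(Y)$ inside the Legendrian $T_p(\partial Y)$; by \autoref{lem:flat_form} the form $d\lambda$ is nondegenerate on any such complement, while $d\lambda$ vanishes on $T_p(\partial Y)$ since $\partial Y$ is Legendrian --- a contradiction.

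Your approach bypasses the flat connection entirely. You use the Legendrian neighborhood theorem for $\partial Y$ to write $Y$ as a graph over a half-space, factor the graphing functions through $y_1$, and compute $\lambda$ explicitly. The key observation is that interior singular points near $p$ force $\partial_{x_1}\tilde z(0)=1$, which in turn forces $S(Y)\cap\partial Y$ to be a smooth $(n-1)$--submanifold near $p$; invariance of domain then makes it open in $S(Y)$, contradicting the assumed accumulation. This is a unified argument (no tangential/transversal case split) and is self-contained modulo the Legendrian neighborhood theorem, at the cost of being more computational. The paper's version is terser because the heavy lifting was already done in \autoref{claim:span_vf} and \autoref{lem:flat_form}; yours effectively reproves a sliver of that structure in adapted coordinates. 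One small point: your appeal to ``without boundary'' for $S(Y)$ is really part of the hypothesis of the lemma (that $S(Y)$ is an $(n-1)$--dimensional submanifold in the sense of \autoref{prop:sing}), not something you need to import separately from \autoref{prop:sing}.
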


\begin{proof}
A connected component of $S(Y)$ cannot be tangent to, but not contained in, $\p Y$ due to the existence of a horizontal isotropic foliation in a tubular neighborhood. A connected component of $S(Y)$ also cannot transversely intersect $\p Y$ because if $p \in S(Y) \cap \p Y$ is a transversal intersection point, then we know from the proof of \autoref{lem:flat_form} that $d\lambda$ is nondegenerate on $T_pS(Y)^\bot$, where $\lambda$ is the restricted contact form. On the other hand, $d\lambda(p)=0$ because $T_pS(Y)^\bot \subset T_p (\p Y)$ is Legendrian. Therefore we have a contradiction.
\end{proof}

\subsection{Coisotropic submanifolds of arbitrary dimension} \label{subsec:coiso_higher}

Now we consider $k$-dimensional coisotropic submanifold $Y^k \subset M^{2n+1}$ with any $n+1 \leq k \leq 2n$.  In this case, we define the (singular) {\em characteristic foliation} $\F_\xi$ on $Y$ by $$\F_\xi = \ker (\alpha \wedge (d\alpha)^{k-n-1}),$$ where the singular set $S(Y)=\{p\in Y~|~T_p Y \subset \xi_p\}$. When $k=n+1$, our definition coincides with the definition of Legendrian foliation given at the beginning of Section \ref{subsec:coiso}. When $k=2n$, the coisotropy condition is void and $\F_\xi$ is a vector field. The well-definedness of $\F_\xi$ is the content of the following lemma.

\begin{lemma} \label{lem:char_fol}
The distribution $\F_\xi$ can be integrated to a $(2n-k+1)$-dimensional foliation away from $S(Y)$.
\end{lemma}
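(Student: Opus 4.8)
The plan is to show that, away from $S(Y)$, the distribution $\F_\xi=\ker\bigl(\alpha\wedge(d\alpha)^{k-n-1}\bigr)|_{TY}$ coincides with the null distribution of $d\lambda|_{Y_\xi}$, where $\lambda=\alpha|_Y$ and $Y_\xi=\ker\lambda=TY\cap\xi$; that this distribution has constant rank $2n-k+1$; and that it is involutive, so that Frobenius' theorem applies. For the rank, observe that on $Y\setminus S(Y)$ the $1$-form $\lambda$ is nonvanishing, so $Y_\xi$ is a smooth rank-$(k-1)$ distribution there, and for $v,u\in Y_\xi(p)$ one has $d\lambda(v,u)=d\alpha(v,u)$. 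Hence the null space of $d\lambda|_{Y_\xi(p)}$ is $Y_\xi(p)\cap Y_\xi(p)^{\bot_{d\alpha}}$, which by the coisotropy hypothesis (\autoref{defn:coisotropic}) equals $Y_\xi(p)^{\bot_{d\alpha}}$ and therefore has dimension $2n-(k-1)=2n-k+1$, independently of $p$; in particular $d\lambda|_{Y_\xi}$ has constant rank $2(k-n-1)$, so $(d\lambda|_{Y_\xi})^{k-n-1}$ is nowhere zero while $(d\lambda|_{Y_\xi})^{k-n}\equiv 0$.

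Next comes the pointwise linear algebra identifying $\F_\xi$ with $\mathcal D:=\ker(d\lambda|_{Y_\xi})$. Writing $m=k-n-1$: if $v\notin Y_\xi$, then after rescaling so that $\lambda(v)=1$, contracting $\lambda\wedge(d\lambda)^m$ with $v$ and restricting to $Y_\xi$ yields $(d\lambda|_{Y_\xi})^m\ne0$, so $v\notin\ker\bigl(\lambda\wedge(d\lambda)^m\bigr)$; and for $v\in Y_\xi$, choosing a splitting $TY=\langle w\rangle\oplus Y_\xi$ with $\lambda(w)=1$, a short computation reduces $i_v\bigl(\lambda\wedge(d\lambda)^m\bigr)=0$ to $i_v\bigl((d\lambda|_{Y_\xi})^m\bigr)=0$, which holds precisely when $v$ lies in the null space of the constant-rank form $d\lambda|_{Y_\xi}$. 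Thus $\F_\xi=\mathcal D$ is a smooth distribution of constant rank $2n-k+1$ on $Y\setminus S(Y)$, and one records the equivalent description: $v\in\mathcal D$ if and only if $\lambda(v)=0$ and $i_vd\lambda=c\lambda$ for some function $c$.

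Finally, for involutivity I would take local sections $v_1,v_2$ of $\mathcal D$ and an arbitrary local section $u$ of $Y_\xi$; from $i_{v_i}d\lambda\in\langle\lambda\rangle$ and $\lambda(u)=\lambda(v_i)=0$ one reads off $d\lambda(v_i,u)=0$, $d\lambda(v_1,v_2)=0$, $\lambda([v_i,u])=0$, and $\lambda([v_1,v_2])=0$. Substituting $v_1,v_2,u$ into the Cartan formula
\[
0=d(d\lambda)(X,Y,Z)=X\,d\lambda(Y,Z)-Y\,d\lambda(X,Z)+Z\,d\lambda(X,Y)-d\lambda([X,Y],Z)+d\lambda([X,Z],Y)-d\lambda([Y,Z],X)
\]
makes every term except $-d\lambda([v_1,v_2],u)$ vanish, so $d\lambda([v_1,v_2],u)=0$ for every such $u$; combined with $\lambda([v_1,v_2])=0$ this gives $[v_1,v_2]\in\mathcal D$. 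Hence $\F_\xi=\mathcal D$ is involutive, and Frobenius' theorem produces a $(2n-k+1)$-dimensional foliation on $Y\setminus S(Y)$.

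I expect the main technical point to be the linear-algebra identification $\ker\bigl(\lambda\wedge(d\lambda)^{k-n-1}\bigr)=\ker(d\lambda|_{Y_\xi})$ together with verifying the constant-rank claim from the coisotropy condition; once these are in place, the involutivity step is the standard argument for the characteristic foliation of a presymplectic-type structure and poses no real difficulty.
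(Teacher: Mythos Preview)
Your proof is correct, and it follows a somewhat different route from the paper's. The paper works directly with the $(2k-2n-1)$-form $\omega:=\lambda\wedge(d\lambda)^{k-n-1}$ on $Y$ (where $\lambda=\alpha|_Y$): it asserts that $\omega$ is nonvanishing of constant rank away from $S(Y)$, so $\F_\xi=\ker\omega$ has dimension $2n-k+1$, and then invokes the differential-form version of Frobenius via $d\omega=(d\lambda)^{k-n}$, appealing to the rank of $d\alpha$ on $\xi\cap TY$. (As literally stated, the paper claims $(d\lambda)^{k-n}=0$ on $TY$; this is not correct in general---for $k=n+1$ it would force $d\lambda\equiv 0$---but what \emph{is} true and suffices is $i_{v_1}i_{v_2}d\omega=0$ for $v_1,v_2\in\F_\xi$, which follows from the identity $i_vd\lambda\in\langle\lambda\rangle$ you record; indeed $i_{v_1}d\omega$ is then a multiple of $\omega$ itself.)

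Your approach instead identifies $\F_\xi$ with the null distribution of the presymplectic form $d\lambda|_{Y_\xi}$, reads off the constant rank $2n-k+1$ from the coisotropy hypothesis $(Y_\xi)^{\bot_{d\alpha}}\subset Y_\xi$, and proves involutivity by the standard argument for the kernel foliation of a closed $2$-form using $d(d\lambda)=0$ in Cartan's formula. What this buys: the identification $\F_\xi=(Y_\xi)^{\bot_{d\alpha}}$ is a useful structural statement in its own right (it makes transparent that the leaves are isotropic and explains the dimension count), and your involutivity step is self-contained. The paper's route is terser once one has the form version of Frobenius in hand, but it leaves the constant-rank claim and the integrability criterion largely implicit.
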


\begin{proof}
Away from $S(Y)$, it is easy to see that $\alpha \wedge (d\alpha)^{k-n-1}$ is a nonvanishing form of constant rank, so $\ker(\alpha \wedge (d\alpha)^{k-n-1})$ defines a $(2n-k+1)$-dimensional distribution on $Y$. The Frobenius integrability theorem asserts that it is a foliation if $$d(\alpha \wedge (d\alpha)^{k-n-1})=(d\alpha)^{k-n}=0$$ restricted to $Y$. This is obviously true because the rank of $d\alpha$, restricted to $\xi \cap TY$, is equal to $k-n-1$.
\end{proof}

We now give a statement on the dimension of $S(Y)$ for any coisotropic $Y$, similar to \autoref{prop:sing}. Again, we assume that $Y$ is closed.

\begin{prop}\label{prop:sing_high}
For a $C^\infty$-generic coisotropic submanifold $Y$ of dimension $k, n+1 \leq k \leq 2n$, each path-connected component of $S(Y)$ is a $(2n-k)$-dimensional submanifold without boundary if nonempty.
\end{prop}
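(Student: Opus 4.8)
The plan is to adapt the local computation from the proof of \autoref{prop:sing} to the higher-codimensional setting. First I would set up a Darboux chart around a singular point $p \in S(Y)$, so that $\alpha = dz - \sum_{i=1}^n y_i dx_i$ and $p = 0$. Since $Y^k$ is coisotropic of dimension $k$ with $n+1 \le k \le 2n$, I would write $Y$ near $p$ as the graph of a function $f: \RR^k \to \RR^{2n+1-k}$ with $f(0)=0$, $df(0)=0$, where the $k$ independent coordinates on $Y$ are chosen to include $x_1,\dots,x_{n}$ (or an appropriate subset) together with enough of the $y$-coordinates; the precise choice should be dictated by making the projection to $\xi$ generic. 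The expectation is that, just as in the $k=n+1$ case, one can construct $2n-k$ pointwise linearly independent vector fields $V_1,\dots,V_{2n-k}$ on $\Gamma(f)$ satisfying the analogues of \autoref{claim:span_vf}: $i_{V_j}\alpha = 0$, $i_{V_j} d\lambda = 0$ (where $\lambda = \alpha|_Y$), and $[V_j, V_l] = 0$. The coisotropy hypothesis, encoded via Frobenius-type relations coming from $(d\alpha)^{k-n}|_{TY} = 0$ (\autoref{lem:char_fol}), should supply exactly the identities needed to verify these three conditions.

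Granting the claim, the argument then runs exactly as before: conditions (1) and (3) let us integrate $\vspan\{V_1,\dots,V_{2n-k}\}$ to a $(2n-k)$-dimensional isotropic foliation near $0$ in $\Gamma(f)$; conditions (1) and (2) give $\LL_{V_j}\lambda = 0$ by Cartan's formula, so the leaf $S_0$ through $0$ stays inside $S(Y)$ because $\lambda(0)=0$. This shows $S(Y)$ is locally a $(2n-k)$-dimensional submanifold with no self-intersections or self-tangencies, hence each path-connected component has dimension at least $2n-k$. For the reverse inequality and the boundarylessness, I would argue that the contact condition prevents $S(Y)$ from containing a submanifold of dimension $2n-k+1$: such a submanifold would be isotropic and tangent to the characteristic foliation $\F_\xi$ (which has dimension $2n-k+1$ by \autoref{lem:char_fol}), and using the standard neighborhood theorem for isotropic submanifolds one constructs an explicit $C^\infty$-small perturbation of $Y$ — supported near that submanifold, analogous to the function $g$ and the choice $z'(0)\neq 0$ in \autoref{prop:sing} — that makes the offending locus nonsingular. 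Iterating/globalizing this perturbation over $Y$ removes all components of $S(Y)$ of dimension exceeding $2n-k$.

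The main obstacle I anticipate is twofold. First, choosing the graph coordinates in the Darboux chart: in the $k=n+1$ case there was essentially one natural way to split $\RR^{2n+1}$ into base and fiber directions, but for general $k$ one must choose $k$ base directions among $x_1,\dots,x_n,y_1,\dots,y_n$ so that $TY$ projects nicely, and the construction of the $V_j$'s must be done compatibly with the coisotropy condition; some care (and possibly a generic rotation in the Darboux chart) is needed to ensure the vector fields are well-defined and the Frobenius relations take a usable form. Second, and more seriously, verifying the analogue of \autoref{claim:span_vf} in this generality is a more involved linear-algebra-plus-calculus computation than the $k=n+1$ case, because $(d\alpha)^{k-n}|_{TY}=0$ is a weaker and more complicated system of equations than $\alpha\wedge d\alpha|_{TY}=0$; extracting from it the exact identities (the analogues of (\ref{eqn:partial_y})) that force $i_{V_j}d\lambda=0$ and $[V_j,V_l]=0$ is where the real work lies. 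The perturbation step should be routine once the local normal form near an isotropic submanifold tangent to $\F_\xi$ is written down, mirroring the end of the proof of \autoref{prop:sing}.
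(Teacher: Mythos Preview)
Your proposal is correct and follows essentially the same route as the paper's (sketched) proof: the paper models $Y$ near a singular point as the graph of $f:\RR^k_{x_1,\dots,x_n,y_{2n-k+1},\dots,y_n}\to\RR^{2n-k+1}_{y_1,\dots,y_{2n-k},z}$ (resolving your uncertainty about which coordinates to take as base), asserts that the analogue of \autoref{claim:span_vf} for $2n-k$ vector fields goes through with only notational changes, and notes that the same graphical perturbation handles higher-dimensional singular loci. Your anticipated ``obstacles'' are precisely the details the paper leaves to the reader.
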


\begin{proof}
The proof is essentially the same as the proof of \autoref{prop:sing}, so we only give a sketch here. Now $Y$ in a neighborhood of a singular point $p \in S(Y)$ is modeled on the graph of a function $$f: \RR^{k}_{x_1,\cdots,x_n,y_{2n-k+1},\cdots,y_n} \to \RR^{2n-k+1}_{y_1,\cdots,y_{2n-k},z}, \hspace{3mm}\text{such~that~} f(0)=0,~df(0)=0,$$ in the standard $(\RR^{2n+1},dz-\sum_{i=1}^n y_idx_i)$. This is analogous to (\ref{eqn:local_model}). All the calculations thereafter carries over to this case with little modification, so we leave the details to the interested reader. It is slightly trickier to perturb away higher dimensional singular loci, but it turns out the graphical perturbation used in the proof of \autoref{prop:sing} still works in this case.
\end{proof}

Now the conclusions of \autoref{thm:sing_loci} are just the combination of the conclusions of \autoref{prop:sing}, \autoref{prop:sing_high} and \autoref{lem:flat_form}.

\section{Neighborhood theorems for Legendrian foliations} \label{sec:nbhd_thm}

\subsection{The case of nonsingular Legendrian foliations}

We will show the existence and uniqueness of the germ of a contact structure near a closed (sub-)manifold with fixed nonsingular (Legendrian) foliation. To formulate the existence part more precisely, let $(Y^{n+1},\F)$ be a closed foliated manifold of dimension $n+1$, where $\F$ is a co-oriented codimension one foliation. Consider a vector bundle $E \to Y$ with fiber $T^\ast \F$, then we have the following

\begin{lemma}[Existence] \label{lem:exist_nonsing}
There is a contact structure $\xi$ on the total space of $E$ such that $\F$ is the characteristic foliation on $Y$.
\end{lemma}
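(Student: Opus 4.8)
The plan is to write down an explicit contact form on the total space of $E \to Y$ using the canonical $1$-form on $T^\ast\F$, exactly as in the local model $\alpha = dz - \sum y_i\,dx_i$ but done fiberwise along the leaves of $\F$. Concretely, since $\F$ is co-oriented, choose a defining $1$-form $\beta \in \Omega^1(Y)$ for $\F$, i.e. $\ker\beta = T\F$, and a complementary line field so that $TY = T\F \oplus \mathbb{R}\langle R\rangle$ with $\beta(R) = 1$. On the bundle $E$ with fiber $T^\ast\F$ there is a tautological $1$-form $\eta$, defined at a point $(q, v) \in E$ (with $q \in Y$, $v \in T^\ast_q\F$) by $\eta(w) = v\big(P(\pi_\ast w)\big)$, where $\pi : E \to Y$ is the projection and $P : TY \to T\F$ is the projection along $R$ — this mimics the $\eta$ built in the proof of \autoref{lem:contact_germ}. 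I would then set
\begin{equation*}
\alpha := \pi^\ast\beta - \eta
\end{equation*}
and claim this is a contact form on a neighborhood of the zero section, with $\F$ as its characteristic foliation on $Y = Y \times \{0\}$.

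The verification proceeds in two steps. First, the contact condition $\alpha \wedge (d\alpha)^n > 0$ is checked in a foliated chart: pick local coordinates $(x_1,\dots,x_n, t)$ on $Y$ in which $\F = \{t = \mathrm{const}\}$, so that $\beta = g\,dt$ for some nonvanishing function $g$ (or, absorbing $g$, we may arrange $\beta = dt$ locally after rescaling — though globally $\beta$ need not be closed, which is fine since only the pointwise nondegeneracy is needed). With fiber coordinates $y_1,\dots,y_n$ dual to $dx_1,\dots,dx_n$ on $T^\ast\F$, one computes $\alpha = dt - \sum_i y_i\,dx_i + (\text{lower order in }y)$, and $\alpha \wedge (d\alpha)^n$ evaluated at a point of the zero section is, up to positive factor, $dt \wedge dx_1 \wedge dy_1 \wedge \cdots \wedge dx_n \wedge dy_n$, which is nonzero; by openness of the contact condition it persists in a neighborhood of the zero section. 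Second, restricting to $Y$: since $\eta$ vanishes identically on the zero section (the fiber coordinate $v$ is zero there), $\alpha|_Y = \beta$, hence $\ker(\alpha|_Y) = \ker\beta = T\F$, so $Y$ is coisotropic of dimension $n+1$ with characteristic foliation exactly $\F$; that the leaves are Legendrian follows since $\F = \ker\beta$ has dimension $n$ and $d\alpha|_{T\F} = -d\eta|_{T\F} = -\sum dy_i \wedge dx_i|_{T\F} = 0$ on the zero section because the $y_i$ vanish there.

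The only subtlety — and the step I expect to require the most care — is the globalization: $T^\ast\F$ is a well-defined vector bundle over $Y$ (the foliation $\F$ has a well-defined tangent bundle $T\F$, hence cotangent bundle), and $\eta$ is a genuinely global $1$-form on its total space because the tautological construction is chart-independent. The choice of complement $R$ to $T\F$ (equivalently, of $\beta$ up to scale) is a contractible choice, so it can be made globally. One should also note that the neighborhood of the zero section on which $\alpha$ stays contact may shrink, but since $Y$ is closed a uniform such neighborhood exists. This gives the germ of the contact structure $\xi = \ker\alpha$, completing the existence statement; the bundle $E$ of the theorem is precisely $T^\ast\F$ (rank $n$).
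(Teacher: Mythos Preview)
Your approach is exactly the paper's: choose a defining $1$-form $\beta$ for $\F$ and a transverse line field, build the tautological $1$-form $\eta$ on $T^\ast\F$ using the projection $TY\to T\F$, and set $\alpha=\pi^\ast\beta-\eta$. The construction and the local coordinate check match the paper essentially line for line.

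The one shortfall is that you only argue $\alpha$ is contact \emph{near the zero section}, via an openness argument, whereas the lemma asserts a contact structure on the \emph{entire} total space of $E$ (and the paper explicitly contrasts this with the singular case in a later remark). If you carry the local computation through rather than truncating at ``lower order in $y$'', you find in the chart $(t,x_1,\dots,x_n,y_1,\dots,y_n)$ with $\beta=g\,dt$ and $R=\partial_t+\sum R_i\partial_{x_i}$ that $\alpha=(g+\sum R_iy_i)\,dt-\sum y_i\,dx_i$, and a direct expansion gives $\alpha\wedge(d\alpha)^n=n!\,g\,dt\wedge dx_1\wedge dy_1\wedge\cdots\wedge dx_n\wedge dy_n$, which is nonvanishing for all $y$ since $g>0$. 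So the contact condition holds globally on $E$, not just in a tube; your compactness argument for a uniform neighborhood is unnecessary.
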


\begin{proof}
The technique for constructing the contact structure is similar to the one used in the proof of \autoref{lem:contact_germ}. But here we need one more piece of data, namely, a line field $L$ on $Y$ transverse to $\F$. Then we have $TY=T\F \oplus L$. Let $p_1: TY \to T\F$ be the projection to the first factor. Let $\pi: E \to Y$ be the projection map. We define a ``tautological 1-form'' $\eta$ on $E$ as follows. For any $v \in T_{(p,w)}E$, where $p \in Y, w \in T^\ast_p \F$, define $\eta(v):=w(p_1(\pi_\ast v))$.

Now let $\beta\neq 0 \in \Omega^1(Y)$ be a defining 1-form of $\F$, i.e., $\ker \beta=\F$. Abusing notations, we will also write $\beta$ for $\pi^\ast(\beta) \in \Omega^1(E)$. Then we claim that $\alpha=\beta-\eta$ is a contact form on $E$. To see this, we compute locally by choosing a (foliated) chart 
\begin{equation}
	\phi: \RR^{n+1}_{t,x_1,\cdots,x_n} \to \U(p) \in Y
\end{equation}
near some point $p \in Y$, such that the leaves of $\F$ are given by $\{t=\text{const}\}$. Let $y_i$ be the dual coordinates to $x_i$ on $T^\ast\F$. Suppose that locally $L$ is spanned by the vector field
\begin{equation*}
	L=\langle \p_t+\sum_{i=1}^n R_i\p_{x_i} \rangle
\end{equation*}
where $R_i \in C^\infty(\U(p))$ for $1 \leq i \leq n$.

Locally we may write $\beta=fdt$ for some $f>0 \in C^\infty(\U(p))$. One easily sees that in these coordinates
\begin{equation*}
	\eta=-(\sum_{i=1}^n R_iy_i)dt+\sum_{i=1}^n y_idx_i.
\end{equation*}
So we have
\begin{equation*}
	\alpha=(f+\sum_{i=1}^n R_iy_i)dt-\sum_{i=1}^n y_idx_i.
\end{equation*}

Verifying $\alpha$ is indeed a contact form is a straightforward calculation which we will carry out explicitly as follows.
\begin{IEEEeqnarray*} {rCl}
	\alpha\wedge(d\alpha)^n &=& \Big( (f+\sum_{i=1}^n R_iy_i)dt-\sum_{i=1}^n y_idx_i \Big) \wedge \Big( \sum_{j=1}^n (\frac{\p f}{\p x_j}+\sum_{i=1}^n \frac{\p R_i}{\p x_j}y_i)dx_j \wedge dt  \\
			&& +\: \sum_{i=1}^n R_idy_i \wedge dt+\sum_{i=1}^n dx_i \wedge dy_i \Big)^n \\
			&=& n! \Big( (f+\sum_{i=1}^n R_iy_i)dt-\sum_{i=1}^n y_idx_i \Big) \wedge \Big( dx_1 \wedge dy_1 \wedge\cdots\wedge dx_n \wedge dy_n+\sum_{j=1}^n (\frac{\p f}{\p x_j} \\
			&& +\: \sum_{i=1}^n \frac{\p R_i}{\p x_j}y_i) dx_1 \wedge dy_1 \wedge\cdots\wedge dx_j\wedge\widehat{dy_j} \wedge\cdots\wedge dx_n \wedge dy_n \wedge dt \\
			&&+\: \sum_{i=1}^n R_i dx_1 \wedge dy_1 \wedge\cdots\wedge \widehat{dx_i}\wedge dy_i \wedge\cdots\wedge dx_n \wedge dy_n \wedge dt \Big) \\
			&=& n! ( f+\sum_{i=1}^n R_iy_i-\sum_{i=1}^n R_iy_i ) d\text{vol} \\
			&=& n! fd\text{vol}>0.
\end{IEEEeqnarray*}
Here $~~~\widehat{}~~~~$ means the corresponding term is missing, and $d\text{vol}=dx_1 \wedge dy_1 \wedge\cdots\wedge dx_n \wedge dy_n \wedge dt$.

Finally the assertion that $\F$ is the characteristic foliation with respect to $\xi=\ker\alpha$ follows from the construction.
\end{proof}

It turns out that the contact structure $\xi$ constructed in \autoref{lem:exist_nonsing} is, in fact, uniquely determined by $\F$ up to contactomorphism. This is the content of the following lemma and its proof mimics the proof of Weinstein neighborhood theorem.

\begin{lemma}[Uniqueness] \label{lem:unique_nonsing}
Let $(Y,\F) \subset M$ be a foliated submanifold. Suppose $\xi_0,\xi_1$ are contact structures on $M$ such that $\F$ is a Legendrian foliation with respect to both $\xi_0$ and $\xi_1$. Then there exists neighborhoods $\U_0$ and $\U_1$ of $Y$, and a diffeomorphism $\phi:\U_0 \to \U_1$ such that (1) $\phi|_Y=\id_Y$; (2) $\phi_\ast(\xi_0)=\xi_1$.
\end{lemma}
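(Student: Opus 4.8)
The plan is to run the Weinstein neighborhood argument in the contact setting: first replace $\xi_0,\xi_1$ by contactomorphic copies (via a diffeomorphism fixing $Y$ pointwise) so that they agree to first order along $Y$, and then kill the remaining difference by a Moser-type isotopy whose generating vector field vanishes on $Y$. Throughout I take $Y$ closed.

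\emph{Step 1: normalization along $Y$.} Fix contact forms $\alpha_0,\alpha_1$ for $\xi_0,\xi_1$, with compatible co-orientations. Since $\F$ is the characteristic foliation of each $\xi_i$, we have $\xi_i\cap TY=T\F$ along $Y$, and this is a Lagrangian subbundle of the symplectic bundle $(\xi_i|_Y,d\alpha_i)$. The corank-one subbundles of $TM|_Y$ that meet $TY$ exactly in $T\F$ form, fiberwise, an affine space, so there is a bundle automorphism of $TM|_Y$ that is the identity on $TY$ and carries $\xi_0|_Y$ to $\xi_1|_Y$; realizing it as the $1$-jet along $Y$ of a diffeomorphism fixing $Y$ pointwise and applying it to $\xi_0$, we may assume $\xi_0|_Y=\xi_1|_Y=:\xi|_Y$. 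Rescaling $\alpha_1$ by a positive function defined near $Y$ we may further assume $\alpha_0=\alpha_1$ on $TM|_Y$; in particular $\lambda:=\alpha_i|_{TY}$ is independent of $i$. It remains to match the conformal symplectic normal data: $d\alpha_0|_{\xi|_Y}$ and $d\alpha_1|_{\xi|_Y}$ are two positive symplectic structures on $\xi|_Y$ sharing the Lagrangian subbundle $T\F$. Choosing (smoothly over $Y$, using contractibility of the space of Lagrangian complements) a $d\alpha_0$-Lagrangian complement of $T\F$ and writing both forms in the resulting splitting, one builds explicitly an ``upper triangular'' bundle automorphism $\Phi$ of $\xi|_Y$, equal to the identity on $T\F$, with $\Phi^\ast(d\alpha_0|_{\xi|_Y})=d\alpha_1|_{\xi|_Y}$. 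Since $\Phi$ is the identity on $T\F=\xi|_Y\cap TY$ it extends to an automorphism of $TM|_Y$ that is the identity on $TY$, preserves $\xi|_Y$, and (because it moves vectors only within $\ker\alpha_0$) leaves $\alpha_0$ unchanged on $TM|_Y$; realizing it again as the $1$-jet of a diffeomorphism fixing $Y$ and applying it to $\xi_0$, we obtain, along $Y$,
\[
\alpha_0=\alpha_1 \qquad\text{and}\qquad d\alpha_0|_{\xi|_Y}=d\alpha_1|_{\xi|_Y}.
\]

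\emph{Step 2: interpolation and Moser isotopy.} Put $\alpha_t:=(1-t)\alpha_0+t\alpha_1$ for $t\in[0,1]$. Along $Y$ we have $\alpha_t=\alpha_0$, hence $\ker(\alpha_t|_Y)=\xi|_Y$, and $d\alpha_t|_{\xi|_Y}=(1-t)\,d\alpha_0|_{\xi|_Y}+t\,d\alpha_1|_{\xi|_Y}=d\alpha_0|_{\xi|_Y}$, a positive symplectic form; therefore $\alpha_t\wedge(d\alpha_t)^n>0$ along $Y$, and since $Y$ is closed this persists on a neighborhood $\U$, so $\{\alpha_t\}_{t\in[0,1]}$ is a path of contact forms on $\U$ agreeing with $\alpha_0$ on $Y$. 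Now seek an isotopy $\psi_t$ of a shrinking of $\U$ with $\psi_0=\id$, generated by a time-dependent field $X_t\in\ker\alpha_t$, with $\psi_t^\ast\alpha_t=g_t\,\alpha_0$ for positive $g_t$. Since $\LL_{X_t}\alpha_t=i_{X_t}d\alpha_t$, this reduces to $(\alpha_1-\alpha_0)+i_{X_t}d\alpha_t=\mu_t\alpha_t$; pairing with the Reeb field of $\alpha_t$ gives $\mu_t=(\alpha_1-\alpha_0)(R_{\alpha_t})$, and then nondegeneracy of $d\alpha_t$ on $\ker\alpha_t$ determines $X_t\in\ker\alpha_t$ uniquely by $i_{X_t}d\alpha_t|_{\ker\alpha_t}=-(\alpha_1-\alpha_0)|_{\ker\alpha_t}$. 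The decisive point is that $\alpha_1-\alpha_0$ vanishes on $TM|_Y$, so $X_t|_Y=0$; hence the flow $\psi_t$ fixes $Y$ pointwise and (after shrinking $\U$, using compactness of $Y$) is defined for all $t\in[0,1]$. Then $\phi:=\psi_1$ satisfies $\phi|_Y=\id_Y$ and $\phi^\ast\xi_1=\ker(\psi_1^\ast\alpha_1)=\ker\alpha_0=\xi_0$, i.e.\ $\phi_\ast(\xi_0)=\xi_1$.

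\emph{Main obstacle.} Step 2 is the standard contact Moser/Gray machinery and needs only that $X_t$ vanishes on $Y$, which is immediate once $\alpha_0=\alpha_1$ on $TM|_Y$. The real content is Step 1, and specifically the matching of the conformal symplectic normal data: for an arbitrary first-order identification the straight-line path $\alpha_t$ need not be contact near $Y$, since convex combinations of positive symplectic forms on $\xi|_Y$ need not be nondegenerate. One genuinely has to exploit that $T\F$ is a common Lagrangian subbundle to bring $d\alpha_0$ and $d\alpha_1$ into a common normal form on $\xi|_Y$, and then to carry out this fiberwise symplectic linear algebra smoothly over $Y$ and realize the resulting bundle automorphism of $TM|_Y$ as the $1$-jet along $Y$ of an honest ambient diffeomorphism fixing $Y$. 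The rest is bookkeeping.
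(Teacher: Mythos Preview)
Your proof is correct and follows essentially the same approach as the paper: match the contact forms to first order along $Y$ by exploiting the common Lagrangian subbundle $T\F\subset\xi_i|_Y$, realize the resulting bundle automorphism of $TM|_Y$ as the $1$-jet of a diffeomorphism fixing $Y$ (the paper invokes the Whitney Extension Theorem explicitly), and then finish with the standard contact Moser argument. The only cosmetic difference is that the paper builds the first-order identification in a single linear map $L_p$ (sending the $d\alpha_0$-symplectic dual basis $f_i$ of a frame of $T_p\F$ to the $d\alpha_1$-dual basis $g_i$, with complements chosen via a Riemannian metric), whereas you decompose this into three substeps---match $\xi|_Y$, then $\alpha|_{TM|_Y}$, then $d\alpha|_{\xi|_Y}$---and you spell out the Moser step in more detail than the paper does.
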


We first recall the Whitney extension theorem as follows.

\begin{theorem}[Whitney Extension Theorem] \label{thm:WET}
Let $Y \subset M$ be a submanifold. Suppose there is a pointwise linear isomorphism $L_p: T_p M \to T_p M$, depending smoothly on $p \in Y$, such that $L_p|_{T_p Y}=\id_{T_p Y}$. Then there exists an embedding $h:\U \to M$ of some neighborhood $\U$ of $Y$ in $M$ such that $h|_Y=\id_Y$ and $dh_p=L_p$ for all $p \in Y$.
\end{theorem}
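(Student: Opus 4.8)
The plan is to build the embedding $h$ explicitly out of the linear data $L_p$ by means of a tubular neighborhood constructed from the exponential map, and then to verify the two required properties ($h|_Y=\id_Y$ and $dh_p=L_p$) by a direct differential computation along $Y$. First I would fix an auxiliary Riemannian metric on $M$ and let $\nu \to Y$ be the normal bundle, i.e. the orthogonal complement of $TY$, with projection $\pi:\nu \to Y$. The exponential map furnishes the standard tubular neighborhood diffeomorphism $\Phi:\mathcal{O}\to\U$, $\Phi(v)=\exp_{\pi(v)}(v)$, from a neighborhood $\mathcal{O}$ of the zero section onto a neighborhood $\U$ of $Y$ in $M$. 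The key structural fact I will invoke is that, under the canonical identification $T_{0_p}\nu \cong T_pY \oplus \nu_p$, the differential of $\Phi$ along the zero section is the natural inclusion into $T_pM=T_pY\oplus\nu_p$; this follows from $d(\exp_p)_0=\id_{T_pM}$ together with the product structure of $\nu$ near $0_p$.

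Next I would define $h$ by transporting each normal vector through $L$ before exponentiating: set $\tilde h:\mathcal{O}\to M$, $\tilde h(v):=\exp_{\pi(v)}\big(L_{\pi(v)}\,v\big)$, and put $h:=\tilde h\circ\Phi^{-1}$ on $\U$. Although $L_p v$ need not lie in the subspace $\nu_p$, the full exponential $\exp_p$ is defined on a neighborhood of $0$ in all of $T_pM$, so $h$ is a well-defined smooth map (smoothness in $p$ uses the smoothness of $L$, $\exp$, and $\Phi$). Evaluating at $v=0$ gives $\tilde h(0_p)=\exp_p(0)=p$, so $h|_Y=\id_Y$ is immediate.

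I would then compute $dh_p$ for $p\in Y$ by testing it separately on the two summands of $T_pM=T_pY\oplus\nu_p$. For $v\in\nu_p$, differentiating $t\mapsto\tilde h(tv)=\exp_p(L_p(tv))$ at $t=0$ yields $d(\exp_p)_0(L_p v)=L_p v$. For $w\in T_pY$, moving the basepoint along a curve $p(t)$ with $p(0)=p$, $\dot p(0)=w$ and vanishing normal part gives $\tilde h(0_{p(t)})=p(t)$, whose derivative is $w=L_p w$ by the hypothesis $L_p|_{T_pY}=\id$. Since $dh_p$ and $L_p$ are both linear and agree on $T_pY$ and on $\nu_p$, which together span $T_pM$, we conclude $dh_p=L_p$. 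Finally, because each $L_p$ is an isomorphism, $h$ is a local diffeomorphism at every point of $Y$ while restricting to the identity on $Y$; a standard inverse-function-theorem argument then shows $h$ is an embedding after shrinking $\U$.

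The main difficulty is bookkeeping rather than conceptual: one must carefully pin down the identification $T_{0_p}\nu\cong T_pY\oplus\nu_p$ and verify that $d\Phi$ along the zero section is the inclusion, so that the two separate computations on $T_pY$ and on $\nu_p$ genuinely assemble into the full differential $dh_p$ on $T_pM$. It is worth emphasizing that the hypothesis $L_p|_{T_pY}=\id$ is precisely what makes the tangential part of this computation consistent with the requirement $h|_Y=\id_Y$, so no compatibility obstruction arises.
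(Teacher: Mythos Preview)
Your argument is correct and gives a clean, self-contained proof via the exponential map of an auxiliary metric. Note, however, that the paper does not actually prove this theorem: it is stated without proof as a classical tool (``We first recall the Whitney extension theorem as follows'') and then invoked in the proofs of the neighborhood lemmas. So there is no ``paper's own proof'' to compare against; you have supplied what the paper omits. The only small caveat is the final step: passing from ``$dh_p$ is an isomorphism at each $p\in Y$ and $h|_Y=\id_Y$'' to ``$h$ is an embedding on a neighborhood of $Y$'' requires the usual uniform-injectivity argument (e.g.\ using that $\Phi$ is already a diffeomorphism and shrinking so that the fiberwise map $v\mapsto L_{\pi(v)}v$ is injective near the zero section, or invoking compactness of $Y$ when available), not merely the pointwise inverse function theorem. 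This is standard and your phrasing acknowledges it, but it is worth making explicit if you want a fully rigorous write-up.
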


\begin{proof}[Proof of \autoref{lem:unique_nonsing}]
Choose a contact form $\alpha_i$ for $\xi_i$, $i=0,1$, and a Riemannian metric $g$ on $M$. Let $X$ be a nonvanishing vector field on $Y$ transverse to $\F$. Rescaling $\alpha_i$ if necessary, we can assume $\alpha_0(X)=\alpha_1(X)$.

Now we define a pointwise linear isomorphism $L_p: T_p M \to T_p M$ for each $p \in Y$ in the following two steps.

\vspace{2mm}
\begin{enumerate}
\item[Step 1:] Let $L_p|_{T_p Y}=\id_{T_p Y}$. \vspace{3mm}
\item[Step 2:] Consider the orthogonal decomposition $$\xi_{i,p}=T_p\F \oplus (T_p\F)^{\bot_i},~i=0,1,$$ with respect to $g$, and notice that $T_p\F$ is a Lagrangian subspace in both $\xi_0$ and $\xi_1$. Choose a basis on $T_p\F=\vspan\{e_1,\cdots,e_n\}$, then a simple symplectic linear algebra implies that we can canonically complete the $e_i$'s to a symplectic basis
    	\begin{align*}
		\xi_{0,p}=\vspan\{e_1,\cdots,e_n,f_1,\cdots,f_n\}, \\
		\xi_{1,p}=\vspan\{e_1,\cdots,e_n,g_1,\cdots,g_n\},
	\end{align*}
in the sense that $d\alpha_0(e_i,f_j)=d\alpha_1(e_i,g_j)=\delta_{ij}$ and all the other pairings vanish, where $(T_p\F)^{\bot_0}=\vspan\{f_1,\cdots,f_n\}$ and $(T_p\F)^{\bot_1}=\vspan\{g_1,\cdots,g_n\}$. Now let $L_p(f_i)=g_i$ for $1 \leq i \leq n$.\label{item:J_map}
\end{enumerate}

It is easy to check that the definition of $L_p$ is independent of the choice of a basis on $T_p\F$, and $L_p$ is smoothly varying with $p$ because the symplectic basis completion is canonical. Therefore \autoref{thm:WET} produces a diffeomorphism $\psi:\U_0 \to \U_1$ between neighborhoods of $Y$ which is fixed on $Y$ such that $\psi^\ast(\alpha_1)=\alpha_0$ and $\psi^\ast(d\alpha_1)=d\alpha_0$ on $Y$.

Finally a standard Moser's technique in a (possibly smaller) neighborhood of $Y$ isotopes $\psi$ to $\phi: \U_0 \to \U_1$ which satisfies all the desired properties.
\end{proof}

\subsection{The case of singular Legendrian foliations}\label{sec:nbhd_sing}

Due to our lack of complete knowledge on the singular loci of Legendrian foliations, we will assume for the rest of this section that
\begin{equation}
S(Y)=S_1 \cup \cdots \cup S_k \subset Y
\end{equation}
is a finite disjoint union of $(n-1)$-dimensional closed co-oriented submanifolds. Recall that a codimension 1 singular foliation $\F=\ker \beta$ in $Y$ has {\em normally controlled singularities} if the singular locus $S(\F)$ is a finite union of $(n-1)$-dimensional closed co-oriented submanifolds, and a tubular neighborhood of each path-connected component of $S(\F)$ has the structure of a flat disk bundle (\textit{cf.} Section \ref{subsec:nbhd_sing}) with respect to which the restriction of $\beta$ to the fiber direction is a CCL 1-form in the sense of \autoref{defn:good}.

With the above assumption in mind, we present the existence of a germ of contact structure with given singular foliation as follows.

\begin{lemma}[Existence]\label{lem:exist_sing}
Suppose $(Y,\F)$ is foliated manifold with normally controlled singularities. Then there exists a vector bundle $E \to Y$ and a contact structure $\xi$ on a neighborhood of the 0-section such that $\F$ is the characteristic foliation on $Y$.
\end{lemma}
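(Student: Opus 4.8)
The plan is to assemble the desired contact germ from the two existence results already in hand --- \autoref{lem:exist_nonsing} on the nonsingular part of $\F$ and \autoref{lem:contact_germ} near each component of $S(Y)$ --- and to glue the pieces together using the uniqueness statement \autoref{lem:unique_nonsing}. Concretely, for each $j$ let $N(S_j) \subset Y$ be the flat disk-bundle neighborhood furnished by the normally-controlled-singularity hypothesis, carrying a CCL $1$-form in the fiber direction; shrinking, assume the $N(S_j)$ are pairwise disjoint, and let $\h_j$ be the horizontal foliation of the flat connection on $N(S_j)$. Set $Y_0 := Y \setminus \bigcup_j N'(S_j)$ for slightly smaller closed neighborhoods $N'(S_j) \subset N(S_j)$, so that $Y_0$ is an open manifold on which $\F$ is a nonsingular co-oriented codimension-one foliation. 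Applying \autoref{lem:contact_germ} over each $N(S_j)$ produces an open contact manifold $(W_j,\xi_j)$ --- a neighborhood of the zero section in the rank-$n$ bundle $\E_j \to N(S_j)$ with fiber $\RR \times T^\ast\h_j$ --- in which $N(S_j)$ sits as a submanifold whose characteristic foliation is $\F|_{N(S_j)}$, with singular locus $S_j$; applying \autoref{lem:exist_nonsing} over $Y_0$ produces an open contact manifold $(W_0,\xi_0)$ --- a neighborhood of the zero section in $E_0 = T^\ast\F$ --- whose characteristic foliation along $Y_0$ is $\F|_{Y_0}$.

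On the overlap $A_j := N(S_j) \cap Y_0$ the foliation $\F$ is nonsingular, and both $\xi_j$ and $\xi_0$ restrict to contact structures for which $\F|_{A_j}$ is the characteristic foliation. \autoref{lem:unique_nonsing}, applied with ambient manifolds $W_j$ and $W_0$ and submanifold $A_j$, then yields neighborhoods of $A_j$ in $W_j$ and in $W_0$ together with a diffeomorphism $\Phi_j$ between them which is the identity on $A_j$ and carries $\xi_0$ to $\xi_j$. (Since $A_j$ need not be closed, one runs the Whitney-extension-plus-Moser argument of \autoref{lem:unique_nonsing} germ-wise along $A_j$, or over a compact exhaustion; the neighborhoods produced may shrink, which is harmless.) Using the $\Phi_j$ to identify open subsets of the $W_j$ with open subsets of $W_0$ --- the identifications have pairwise disjoint images because the $N(S_j)$ are disjoint, so no triple overlaps occur --- glue $W_0$ and the $W_j$ into a single contact manifold $(W,\xi)$. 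By construction $Y = Y_0 \cup \bigcup_j N(S_j)$ embeds in $W$ as a closed submanifold with $\xi \cap TY = \F$, i.e. $\F$ is the characteristic foliation of $\xi$ along $Y$. Finally, $W$ is a neighborhood of $Y$; shrinking it to a tubular neighborhood and identifying the latter with the total space of the normal bundle $E := \nu(Y \subset W)$ (a rank-$n$ real vector bundle) gives the asserted $E$ with $\xi$ defined near the zero section.

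I expect the crux to be the gluing on the overlaps $A_j$ --- more precisely, being certain that \autoref{lem:unique_nonsing} genuinely applies there and reconciles the two very different-looking local models. Over $N(S_j)$ the fiber is $\RR \times T^\ast\h_j$ and the contact form carries a bare $dz$ coming from the $\RR$-factor (this is exactly what makes $S_j$ singular), whereas over $Y_0$ the fiber is the full $T^\ast\F$ with the usual tautological term, so on $A_j$ the two contact forms are honestly different $1$-forms and no algebraic substitution identifies them; what saves the day is that $\F|_{A_j}$ is Legendrian with respect to both, so the Weinstein-type argument of \autoref{lem:unique_nonsing} (the linear normal-form datum $L_p$ built from the canonical symplectic completion of a Lagrangian basis, followed by Moser) does the reconciliation for us. The remaining ingredients --- disjointness and shrinking of neighborhoods, the germ-wise application over the non-closed $A_j$, and the passage from the glued $W$ to the honest vector bundle $E$ via the normal bundle --- are routine. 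Thus no new geometric input beyond \autoref{lem:exist_nonsing}, \autoref{lem:contact_germ}, and \autoref{lem:unique_nonsing} is required; the work is entirely in organizing the patching near $S(Y)$.
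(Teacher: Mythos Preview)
Your proposal is correct and follows essentially the same route as the paper: decompose $Y$ into the nonsingular part and tubular neighborhoods of the $S_j$, build contact germs over each piece via \autoref{lem:exist_nonsing} and \autoref{lem:contact_germ} respectively, and glue on the overlaps using \autoref{lem:unique_nonsing}. The only cosmetic difference is in the last step: the paper obtains a fiber bundle with fiber $\RR^n$ after patching (since Moser's isotopy is not fiberwise linear) and then invokes the deformation retraction of $\mathrm{Diff}_0(\RR^n)$ onto $\mathrm{GL}(n,\RR)$ to recognize it as a vector bundle, whereas you pass directly to the normal bundle of $Y$ in the glued manifold $W$; both are fine.
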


\begin{proof}
The proof is just assembling several pieces from previous sections together. Namely, we decompose $$Y=N(S_1) \cup \cdots \cup N(S_k) \cup Q$$ where $N(S_i)$ are tubular neighborhoods of each component of $S(\F)$, and $Q$ is the part of $Y$ where $\F$ is nonsingular. We assume that $Q$ intersects each $N(S_i)$ in a collar neighborhood of $\p N(S_i)$.

Over each $N(S_i)$, \autoref{lem:contact_germ} produces a vector bundle $E_i \to N(S_i)$ with fiber $\RR \times T^\ast \h_i$, where $\h_i$ is the horizontal foliation on $N(S_i)$, together with a contact structure $\xi_i$ on the total space such that $\F|_{N(S_i)}$ is the Legendrian foliation. Over $Q$, \autoref{lem:exist_nonsing} produces a vector bundle $E_0 \to Q$ with fiber $T^\ast \F$ and a contact structure $\xi_0$ on the total space such that $\F|_Q$ is the Legendrian foliation.

We only need to observe that the above pieces of bundles and contact structures can be patched together, thanks to \autoref{lem:unique_nonsing} (applied to the overlap between $Q$ and each $N(S_i)$). Finally we note that the gluing map is in general not fiberwise linear due to the application of Moser's technique in \autoref{lem:unique_nonsing}, so to be honest, we will get a fiber bundle with fiber $\RR^n$ after patching. But this is a vector bundle since $\text{Diff}_0(\RR^n)$ deformation retracts onto $\text{GL}(n,\RR)$.
\end{proof}

\begin{remark}
Unlike \autoref{lem:exist_nonsing}, the contact structure constructed in \autoref{lem:exist_sing} is only defined near the 0-section.
\end{remark}

Now we present the uniqueness of the germ of contact structure constructed in \autoref{lem:exist_sing}, which is analogous to \autoref{lem:unique_nonsing}. But due to the presence of singularities, we cannot expect the diffeomorphism to be the identity on $Y$ anymore.

\begin{lemma}[Uniqueness]\label{lem:unique_sing}
Let $(Y,\F) \subset M$ be a singularly foliated submanifold with co-oriented singularities. Suppose $\xi_0,\xi_1$ are contact structures on $M$ such that $\F$ is a Legendrian foliation with respect to both $\xi_0$ and $\xi_1$. Then there exists neighborhoods $\U_0$ and $\U_1$ of $Y$, and a diffeomorphism $\phi:\U_0 \to \U_1$ such that (1) $\phi(Y)=Y$; (2) $\phi_\ast(\xi_0)=\xi_1$.
\end{lemma}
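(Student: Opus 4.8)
The plan is to prove \autoref{lem:unique_sing} by reducing it to the two uniqueness results already established: \autoref{lem:unique_nonsing} for the non-singular part and the rigidity of the CCL model near the singular locus. First I would fix contact forms $\alpha_0,\alpha_1$ for $\xi_0,\xi_1$ and a Riemannian metric $g$ on $M$, and recall the decomposition $Y=N(S_1)\cup\cdots\cup N(S_k)\cup Q$ from the proof of \autoref{lem:exist_sing}, where $Q$ is the locus on which $\F$ is non-singular and each $N(S_i)$ is a tubular neighborhood of a component of $S(Y)$, meeting $Q$ in a collar of $\bdry N(S_i)$. Over $Q$, \autoref{lem:unique_nonsing} directly supplies a germ of diffeomorphism $\phi^Q$ fixing $Q$ pointwise with $\phi^Q_\ast\xi_0=\xi_1$; the work is therefore concentrated near the $S_i$.

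The heart of the argument is a normal-form statement near a single component $S=S_i$: any contact structure for which $\F$ is a Legendrian foliation with co-oriented singular locus $S$ is, in a neighborhood of $S$, contactomorphic to the model of \autoref{lem:contact_germ} built from the induced flat disk bundle structure on $N(S)$ and the CCL $1$-form $\beta_i := \lambda|_{\text{fiber}}$. To see this I would run the proof of \autoref{prop:sing}/\autoref{lem:flat_form} for each $\xi_j$: this yields, intrinsically on $N(S)$, the same flat connection $\h$ (its horizontal leaves are the integral manifolds of $\ker d\lambda_j$, which are determined by $\F$ and by $S$ being the singular set, hence independent of $j$ up to the isotopy already absorbed) and the same covariant-constant restricted form $\lambda_j|_{N(S)}=\hat\beta_j$. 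One then checks $\hat\beta_0$ and $\hat\beta_1$ are isotopic through CCL $1$-forms — both vanish only on $S$, both have $d\hat\beta_j>0$ on fibers, both are $\Hol(\pi_1(S))$-invariant — via a fiberwise Moser/Gray argument equivariant under the holonomy, giving a bundle diffeomorphism of $N(S)$ (not the identity on $S$, only preserving $S$ setwise) conjugating $\hat\beta_0$ to $\hat\beta_1$. Pulling the model contact form $dz+\tilde\beta-\eta$ back through this and matching the remaining Weinstein-type data transverse to $\F$ inside $\ker\alpha_j$ (as in Step 2 of \autoref{lem:unique_nonsing}, now only along $S$ and extended over $N(S)$ by the flat structure), an application of the Whitney extension theorem followed by Moser's trick produces a germ of diffeomorphism $\phi^{S_i}: \U_0(N(S_i))\to \U_1(N(S_i))$ with $\phi^{S_i}(S_i)=S_i$ and $(\phi^{S_i})_\ast\xi_0=\xi_1$.

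Finally I would glue: on the collar overlaps $Q\cap N(S_i)$ both $\phi^Q$ and $\phi^{S_i}$ are germs of contactomorphisms between neighborhoods of the same non-singular piece of $\F$, so by the uniqueness-up-to-isotopy in \autoref{lem:unique_nonsing} (or directly by Moser applied to $(\phi^{S_i})^{-1}\circ\phi^Q$, which fixes the overlap and is contact-form-preserving to first order there) they differ by a contact isotopy supported in the collar; interpolating with a partition of unity in the collar direction patches the pieces into a single $\phi:\U_0\to\U_1$ with $\phi(Y)=Y$ and $\phi_\ast\xi_0=\xi_1$. The main obstacle I expect is the equivariance bookkeeping in the middle step: ensuring the fiberwise isotopy between the two CCL forms can be made invariant under the holonomy $\Hol(\pi_1(S))\subset\diff(D)$ and hence descends to a genuine bundle map of $N(S)$, rather than just a fiberwise one; this is where the hypothesis that $\F$ have genuinely normally controlled (flat, CCL) singularities — not merely a codimension-$2$ singular set — is doing real work, and it is the reason the diffeomorphism cannot be taken to fix $Y$ pointwise.
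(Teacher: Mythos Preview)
Your approach is workable but genuinely different from the paper's. You propose a piece-by-piece strategy: establish a CCL normal form near each $S_i$ via an equivariant fiberwise Moser, apply \autoref{lem:unique_nonsing} on $Q$, and then glue. The paper instead constructs a \emph{single} pointwise linear isomorphism $L_p:T_pM\to T_pM$ over all of $Y$ at once, extending the Lagrangian-complement identification from \autoref{lem:unique_nonsing} across the singular locus by interpolating (via a radial cutoff $\chi(r)$) between $f_n\mapsto g_n$ away from $S$ and $X_0\mapsto X_1$ on $S$; it then applies the Whitney extension theorem and Moser once. The co-orientation hypothesis enters not through any holonomy-equivariance bookkeeping but through a direct verification (\autoref{claim:path_of_contact}) that the convex path $\alpha_t=(1-t)\alpha_0+t\alpha_1$ remains contact near $S$, which boils down to $d\lambda_0$ and $d\lambda_1$ inducing the same orientation on the normal $2$-planes.

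Your route buys modularity and makes the role of the CCL model explicit; the paper's route buys avoiding both the equivariant fiberwise Moser and the gluing. Two cautions on your version. First, the equivariance obstacle you flag is milder than you suggest: since $\lambda_1=\mu\lambda_0$ with $\mu>0$ on $Y\setminus S$, the linear path $(1-t)\hat\beta_0+t\hat\beta_1$ stays CCL, and its Moser vector field is automatically holonomy-invariant, so it descends to a bundle map (which incidentally fixes $S$ pointwise, not just setwise, since each fiberwise map fixes the origin). Second, your gluing as written---interpolating $\phi^Q$ and $\phi^{S_i}$ by a partition of unity in the collar---does not produce a contactomorphism. The clean fix is to reorder: first build the $\phi^{S_i}$, pull back $\xi_1$ through them to get a $\xi_1'$ that agrees with $\xi_0$ near $S(Y)$ and still has $\F$ as its Legendrian foliation (the $\phi^{S_i}$ preserve $\F$), then apply \autoref{lem:unique_nonsing} to $(\xi_0,\xi_1')$ on a neighborhood of all of $Y$, where the comparison is now effectively nonsingular. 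Composition gives $\phi$ with no patching required.
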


\begin{proof}
Let $\alpha_0,\alpha_1$ be contact forms for $\xi_0,\xi_1$ respectively, and $g$ be a Riemannian metric on $M$. For the sake of simplicity, we assume the singular locus $S$ of $\F$ is connected, and write $Y=N_\epsilon(S)\cup Q$ be the decomposition as before, such that $Q$ intersects $N_\epsilon(S)$ in an arbitrarily small neighborhood of $\p N_\epsilon(S)$. Here $N_\epsilon(S)$ is a tubular neighborhood of $S$ of radius $\epsilon>0$, a small constant which will be determined later. Let $\lambda_i=\alpha_i|_Y, i=0,1,$ be the restricted contact form. Then $\lambda_1=\mu\lambda_0$ away from the singular locus $S(Y)$, where $\mu:Y\setminus S(Y) \to \RR_+$, as they define the same characteristic foliation $\F$. It follows from the proof of \autoref{lem:flat_form} that on $N(S)$, $\h=\ker(d\lambda_0)=\ker(d\lambda_1)$ defines an isotropic codimension 2 foliation on $Y$ whose leaves are contained in the leaves of $\F$. Here we have used the fact that $d\lambda_1=d(\mu\lambda_0)=d\mu\wedge\lambda_0+\mu d\lambda_0$ and $\lambda_0$ vanishes on $\ker(d\lambda_0)$.

Let $X_i$ be the unit normal vector field to $\xi_i$ for $i=0,1$. We define a pointwise linear isomorphism $L_p:T_p M \to T_p M$ for each $p \in Y$ in three steps as follows.

\vspace{3mm}
\begin{enumerate}
\item[Step 1:] Let $L_p|_{T_p Y}=\id_{T_p Y}$. \vspace{3mm}
\item[Step 2:] We define $L_p$ for $p \in Q$. Orthogonally decompose $$\xi_{i,p}=T_p\F \oplus (T_p\F)^{\bot_i},~i=0,1,$$ with respect to the metric $g$ as before. The same argument used in \autoref{lem:unique_nonsing} defines a map $L_p(f_i)=g_i$ for $1 \leq i \leq n$, where
	\begin{align*}
		\xi_{0,p}=\vspan\{e_1,\cdots,e_n,f_1,\cdots,f_n\}, \\
		\xi_{1,p}=\vspan\{e_1,\cdots,e_n,g_1,\cdots,g_n\},
	\end{align*}
are the canonical symplectic basis adapted to the orthogonal decompositions.
\item[Step 3:] Now we define $L_p$ for $p \in N_\epsilon(S)$. Given $p \in N_\epsilon(S)\setminus S$, pick a basis on $T_p\h=\vspan\{e_1,\cdots,e_{n-1}\}$, and let $e_n$ generate $T_p\F/T_p\h$ $\simeq \RR$. Let $(T_p\F)^{\bot_0}=\vspan\{f_1,\cdots,f_n\}$ be the canonical dual basis with respect to $d\alpha_0$, i.e., $d\alpha_0(e_i,f_j)=\delta_{ij}$ and all the other pairings vanish. Similarly let $(T_p\F)^{\bot_1}=\vspan\{g_1,\cdots,g_n\}$ with respect to $d\alpha_1$. Let $\chi(r):[0,\epsilon] \to [0,\epsilon]$ be a smooth increasing function such that $\chi(r)=0$ for $r$ close to $0$ and $\chi(r)=\epsilon$ for $r$ close to $\epsilon$. Denote $r_p$ the distance from $p$ to $S$. Finally we define $L_p(f_i)=g_i$ for $1 \leq i \leq n-1$, and
        \begin{equation} \label{eqn:normal_map_sing}
        L_p((\epsilon-\chi(r_p))X_0+\chi(r_p)f_n)=(\epsilon-\chi(r_p))X_1+\chi(r_p)g_n,
        \end{equation}
    for any $p \in N_\epsilon(S)\setminus S$. Our definition obviously extends to $S$ by asking $L_p(X_0)=X_1$ for $p \in S$.
\end{enumerate}

One can check that the definition of $L_p$ is independent of the choice of various basis, well-defined on the overlaps, and depends smoothly on $p \in Y$. In particular we note that for small $\epsilon$, the vector $(\epsilon-\chi(r_p))X_0+\chi(r_p)f_n$, together with $\{f_1,\cdots,f_{n-1}\}$, span $T_p M/T_p Y$. Similarly for $(\epsilon-\chi(r_p))X_1+\chi(r_p)g_n$.

Applying \autoref{thm:WET}, we get a diffeomorphism $\psi:\U_0 \to \U_1$ between neighborhoods of $Y$ which is fixed on $Y$, such that $d\psi|_Y=L$. Abusing notations, we still use $\alpha_1$ to denote $\psi^\ast(\alpha_1)$. Observe that $\alpha_0=\alpha_1$ and $d\alpha_0=d\alpha_1$ on $Q$ by construction, but they do not necessarily agree on $N_\epsilon(S)$ (in fact not even on $S$ because $d\lambda_0 \neq d\lambda_1$ in general).

\begin{claim}\label{claim:path_of_contact}
The form $\alpha_t=(1-t)\alpha_0+t\alpha_1$ is contact on $Y$, and therefore on a neighborhood of $Y$, for all $0 \leq t \leq 1$.
\end{claim}

\begin{proof}[Proof of \autoref{claim:path_of_contact}]
It is easy to see that $\alpha_t$ is contact along $S$ since both $d\lambda_0$ and $d\lambda_1$ define positive area forms\footnote{Here the matching of the co-orientation of $S\subset Y$ is crucial.} in the normal plane to $S$. Therefore we only need to verify the contact condition on $N_\epsilon(S) \setminus S$. Let $\n$ be the unit normal vector field to $\F$ in $N_\epsilon(S)\setminus S$. To understand the pull-back contact structure $\alpha_1$ on $Y$, we need to solve for $L_p^{-1}(g_n)$ using (\ref{eqn:cov_const_form}). Observe that
\begin{equation}\label{eqn:x_0,x_1}
X_0=C_1(\n-\sum_{i=1}^n \langle\n,f_i\rangle f_i), \text{~~and~~} X_1=C_2(\n-\sum_{i=1}^n \langle\n,g_i\rangle g_i),
\end{equation}
where $C_1=1/||\n-\sum_{i=1}^n \langle\n,f_i\rangle f_i||$, $C_2=1/||\n-\sum_{i=1}^n \langle\n,g_i\rangle g_i||$, and $\langle \cdot,\cdot \rangle$ denotes the inner product induced by $g$. Plug (\ref{eqn:x_0,x_1}) into (\ref{eqn:normal_map_sing}) and use the fact that $L_p(\n)=\n$, we obtain for each $p \in N_\epsilon(S)\setminus S$
\begin{equation*}
B'L^{-1}_p(g_n)=\sum_{i=1}^{n-1}A'_i f_i + B''f_n+C'\n
\end{equation*}
where $A'_i=(1-r_p)\sum_{i=1}^{n-1}\langle\n,C_2 g_i-C_1 f_i\rangle$, $B'=C_2(\chi(r_p)-\langle\n,g_n\rangle)$, $B''=C_1(r_p-\langle\n,f_n\rangle)$, and $C'=(C_1-C_2)(1-r_p)$. Now we choose small $\epsilon>0$ such that $\langle\n,f_n\rangle>2\epsilon$ and $\langle\n,g_n\rangle>2\epsilon$ for every $p \in N_\epsilon(S)$. This is possible because both $d\lambda_0$ and $d\lambda_1$ restricts to positive area forms on the normal plane to $S$.

By letting $A_i=A'_i/B'$, $B=B''/B'$ and $C=C'/B'$, we get
\begin{equation}\label{eqn:solve_gn}
L^{-1}_p(g_n)=\sum_{i=1}^{n-1}A_i f_i + Bf_n+C\n
\end{equation}
such that $B>0$. Therefore we have computed the symplectic basis $$\xi_1=\vspan\{e_1,\cdots,e_n,f_1\cdots,f_{n-1},L^{-1}_p(g_n)\}$$ with respect to the pull-back symplectic form $d\alpha_1$. Now a straightforward calculation using (\ref{eqn:solve_gn}) verifies the assertion of \autoref{claim:path_of_contact}, and we leave the details to the interested reader.
\end{proof}

Finally the standard Moser's technique, applied to the path $\alpha_t$, isotopes $\psi$ to a diffeomorphism $\phi:\U_0 \to \U_1$ which satisfies all the desired properties.
\end{proof}

Finally the conclusion of \autoref{thm:nbhd} is the combination of the conclusions of \autoref{lem:exist_nonsing}, \autoref{lem:unique_nonsing}, \autoref{lem:exist_sing} and \autoref{lem:unique_sing}.

\bibliography{mybib}

\providecommand{\bysame}{\leavevmode\hbox to3em{\hrulefill}\thinspace}
\providecommand{\MR}{\relax\ifhmode\unskip\space\fi MR }
% \MRhref is called by the amsart/book/proc definition of \MR.
\providecommand{\MRhref}[2]{%
  \href{http://www.ams.org/mathscinet-getitem?mr=#1}{#2}
}
\providecommand{\href}[2]{#2}
\begin{thebibliography}{MNW13}

\bibitem[Ehr95]{Ehr1995}
Charles Ehresmann, \emph{Les connexions infinit\'esimales dans un espace
  fibr\'e diff\'erentiable}, S\'eminaire {B}ourbaki, {V}ol.\ 1, Soc. Math.
  France, Paris, 1995, pp.~Exp.\ No.\ 24, 153--168. \MR{1605161}

\bibitem[Eli89]{Eli1989}
Y.~Eliashberg, \emph{Classification of overtwisted contact structures on
  {$3$}-manifolds}, Invent. Math. \textbf{98} (1989), no.~3, 623--637.
  \MR{1022310 (90k:53064)}

\bibitem[Eli92]{Eli1992}
Yakov Eliashberg, \emph{Contact {$3$}-manifolds twenty years since {J}.
  {M}artinet's work}, Ann. Inst. Fourier (Grenoble) \textbf{42} (1992),
  no.~1-2, 165--192. \MR{1162559 (93k:57029)}

\bibitem[EP11]{EP2011}
John~B. Etnyre and Dishant~M. Pancholi, \emph{On generalizing {L}utz twists},
  J. Lond. Math. Soc. (2) \textbf{84} (2011), no.~3, 670--688. \MR{2855796}

\bibitem[Gir91]{Gir1991}
Emmanuel Giroux, \emph{Convexit\'e en topologie de contact}, Comment. Math.
  Helv. \textbf{66} (1991), no.~4, 637--677. \MR{1129802 (93b:57029)}

\bibitem[HH]{HHprep}
Ko~Honda and Yang Huang, \emph{On {L}egendrian foliations in contact manifold
  {II}: {O}bstruction to symplectic fillings}, in preparation.

\bibitem[MNW13]{PNW2013}
Patrick Massot, Klaus Niederkr{\"u}ger, and Chris Wendl, \emph{Weak and strong
  fillability of higher dimensional contact manifolds}, Invent. Math.
  \textbf{192} (2013), no.~2, 287--373. \MR{3044125}

\bibitem[Mor01]{Mor2001}
Shigeyuki Morita, \emph{Geometry of characteristic classes}, Translations of
  Mathematical Monographs, vol. 199, American Mathematical Society, Providence,
  RI, 2001, Translated from the 1999 Japanese original, Iwanami Series in
  Modern Mathematics. \MR{1826571 (2002d:57019)}

\bibitem[Nie]{Nie}
Klaus Niederkr{\"u}ger, \emph{Habilitation}, in preparation.

\bibitem[Nie06]{Nie2006}
\bysame, \emph{The plastikstufe---a generalization of the overtwisted disk to
  higher dimensions}, Algebr. Geom. Topol. \textbf{6} (2006), 2473--2508.
  \MR{2286033 (2007k:57053)}

\bibitem[Pre07]{Pre2007}
Francisco Presas, \emph{A class of non-fillable contact structures}, Geom.
  Topol. \textbf{11} (2007), 2203--2225. \MR{2372846 (2008j:53147)}

\bibitem[Thu76]{Th1976}
W.~P. Thurston, \emph{Existence of codimension-one foliations}, Ann. of Math.
  (2) \textbf{104} (1976), no.~2, 249--268. \MR{0425985 (54 \#13934)}

\end{thebibliography}
\bibliographystyle{amsalpha}

\end{document}